\numberwithin{equation}{section}
\theoremstyle{definition}
\newtheorem{thm}{Theorem}[section]
\newtheorem{cor}[thm]{Corollary}
\newtheorem{lem}[thm]{Lemma}
\newtheorem{exa}[thm]{Example}
\newtheorem{prop}[thm]{Proposition}
\newtheorem{defi}[thm]{Definition}
\newtheorem{rem}[thm]{Remark}
\newtheorem{note}[thm]{Notation}
\newtheorem{para}[thm]{}
\newtheorem{se}[thm]{Setup}
\newtheorem{nota}[thm]{Notations/Remark}
\DeclareMathOperator{\NL}{\mathrm{NL}}
\DeclareMathOperator{\red}{\mathrm{red}}
\DeclareMathOperator{\Hc}{\mathcal{H}om}
\DeclareMathOperator{\Ima}{\mathrm{Im}}
\DeclareMathOperator{\op}{\mathcal{O}_{\mathbb{P}^3}(d)}
\DeclareMathOperator{\p3}{\mathbb{P}^3}
\DeclareMathOperator{\pr}{\mathrm{pr}}
\DeclareMathOperator{\Spec}{\mathrm{Spec}}
\DeclareMathOperator{\N}{\mathcal{N}}
\DeclareMathOperator{\T}{\mathcal{T}}
\DeclareMathOperator{\I}{\mathcal{I}}
\DeclareMathOperator{\mo}{\mathcal{O}}
\newcommand{\mr}[1]{\mathrm{#1}}
\newcommand{\mb}[1]{\mathbb{#1}}
\newcommand{\mc}[1]{\mathcal{#1}}
\newcommand{\ov}[1]{\overline{#1}}
\newcommand{\up}[3]{\Upsilon^{#1}_{#2 \le #3}}
\begin{document}

\title[Singularities of the Hilbert scheme]
{Singularities of the Hilbert scheme of non-reduced curves}

\author[A. Dan]{Ananyo Dan}

\address{BCAM - Basque Centre for Applied Mathematics, Alameda de Mazarredo 14,
48009 Bilbao, Spain}

\email{adan@bcamath.org}

\subjclass[2010]{$14$C$05$, $13$D$10$, $14$D$15$, $14$D$07$, $32$G$20$}

\keywords{Hilbert schemes of non-reduced curves, non-reduced scheme, Hodge theory, Deformation theory, Hilbert flag scheme, Bloch semi-regularity}




\date{\today}

\begin{abstract}
 In this article, we study the Hilbert scheme of generically non-reduced curves in $\p3$.
 We prove the existence of generically non-reduced curves in $\p3$ for which 
 there exist infinitesimal deformations of the curve that do not induce deformations of the associated reduced scheme.
 We show that such infinitesimal deformations contribute to the non-reducedness of the corresponding 
 Hilbert scheme. We introduce the notion of extension of curves and prove that such infinitesimal deformations (such that the associated reduced scheme
 does not deform) are inherited by the extended curve. Finally, we give examples of extension of curves.
 \end{abstract}

\maketitle

\section{Introduction}
Throughout the article the underlying field is $\mb{C}$.
By a \emph{curve} (resp. \emph{surface}) we will mean pure one (resp. two) dimensional closed sub-scheme of $\p3$ and by an \emph{irreducible component of a scheme} we mean the definition in  \cite[\S $2.4.2$]{liu}.
A classical topic in deformation theory is the study of non-reduced (irreducible) components of Hilbert schemes of smooth curves in $\p3$. 
Mumford in \cite{Mu1} was the first to produce an example of a generically non-reduced component of the Hilbert scheme of smooth curves contained in cubic surfaces in $\p3$.
This was generalized to the case of smooth curves contained in surfaces of degree less than $6$  by Kleppe in  \cite{kle2, k1}
and by Kleppe and Ottem in \cite{kle2012}. It was further extended to the case of smooth curves contained in \emph{any} degree surfaces in $\p3$ (see \cite{dan7}).
In almost all these cases, the generic element of the component (of the Hilbert scheme) corresponds to a smooth curve in $\p3$ which belongs to a base point free linear system of a non-reduced curve
twisted by a multiple of a hyperplane section of a smooth surface containing it. For example, the smooth curve studied by Mumford belongs to a linear system $|2l+4H_X|$ on a cubic surface $X$,
where $l$ is one of the $27$ lines on $X$ and $H_X$ is a hyperplane section of $X$. It is natural to ask, if the Hilbert scheme 
of the corresponding non-reduced curve (without the twist by a multiple of a hyperplane section), is also non-reduced? If so, is there a relation between the singularities of the two 
Hilbert schemes (one with and the other without the twist by a multiple of the hyperplane section)? 
Unfortunately, the techniques in the literature cited above, fail if the generic element of the irreducible 
component of the Hilbert scheme is a non-reduced curve. In particular, the 
standard techniques that overlap over the literatures, involve explicit computations of the cohomology 
groups of the normal sheaves of the smooth curves. If the curve is non-reduced, these computations misbehave 
partly because several results on the vanishing of such cohomology groups fail over non-reduced curves. Furthermore, 
the Riemann-Roch theorem and Serre duality on locally free sheaves over non-reduced curves is considerably more complicated. 

In this article we study non-reduced components of Hilbert schemes of non-reduced curves. 
We use tools from Hodge theory to bypass the difficulties mentioned before. One of the advantages of our approach is that 
we are able to reformulate the problem of non-reducedness at a point of the Hilbert scheme, to certain deformation theoretic properties of the curve corresponding to 
this point. More precisely, we prove under mild assumptions, if there exist first order deformations of the curve such that the associated reduced curve does not deform,
then the Hilbert scheme is non-reduced at the point corresponding to the curve (Theorem \ref{phd05}).
Furthermore, we introduce the notion of ``extension of curves'', which is a generalization of twisting by a multiple of a hyperplane section.
We show that given a non-reduced curve $C$, if there exist infinitesimal deformations of $C$ such that the associated reduced curve $C_{\red}$ does not deform, 
then the infinitesimal deformations of any extension of $C$ also has the same property (Proposition \ref{phd06}). 
Thereby, we can immediately conclude the non-reducedness of the 
Hilbert scheme corresponding to the extended curve, without the need for the usual tedious numerical computations.


One of the first results on non-reduced components of Hilbert schemes of curves parametrizing generically non-reduced curves is due to Martin-Deschamps and 
Perrin in \cite{mar1}. In their article, they prove the existence of generically non-reduced components parametrizing extremal, generically non-reduced curves in $\p3$ (see \cite[Definition $0.1$]{mar1} for a definition).
Our first goal is to study these components. Fix integers $a>0$ and $d>a+6$. 
Let $l$ be a line and $C_2$ a smooth coplanar curve of degree $a$ (meaning $C_2$ lies on a plane containing $l$).
Let $X$ be a smooth degree $d$ surface containing $l$ and $C_2$. Denote by $P$ the Hilbert polynomial of the effective divisor 
$2l+C_2$ in $X$. Martin-Deschamps and Perrin proved that there exists an irreducible component $L$ of the Hilbert scheme of curves with Hilbert polynomial $P$, such that the 
generic element of $L$ correspond to an effective divisor $C_g$ in a smooth degree $d$ surface and is of the form $C_g:=2l_g+C_{2,g}$, where $l_g$ is a line 
and $C_{2,g}$ is a degree $a$ coplanar curve (see Theorem \ref{a84}).
We prove that:
\begin{thm}\label{a84e}
For a general point on $L$, the corresponding curve $C_g$ satisfies the property that there exists a {first order} infinitesimal deformation of $C_g$ in $\p3$ which does not induce a deformation of 
the associated reduced scheme $C_{g,\red}$.
 \end{thm}
See Theorem \ref{tr1}, Corollary \ref{phe44} and Remark \ref{phe14} for a more general statement.

We then introduce the notion of extension of curves. Roughly, an \emph{extension of an effective divisor} $C$ contained in a smooth surface $X$
is an effective divisor $D$ in $X$ such that $D-C$ is an effective divisor which intersects $C$ properly and for any first 
order infinitesimal deformation of $C$, there exists a first order deformation of $D$ 
containing it (see Definition \ref{pr5}). We prove,
\begin{thm}\label{desc1}
 Let $D$ be an extension of $C_g$ ($C_g$ as in Theorem \ref{a84e}), $P_D$ the Hilbert polynomial of $D$ and $L_D$ an irreducible component of 
 the Hilbert scheme of curves with Hilbert polynomial $P_D$, containing the point corresponding to $D$. Suppose $\deg(D) \le d-4$ and for a general element $D_g \in L_D$, the Hilbert polynomial of
 $D_{g,\red}$ is the same as that of $D_{\red}$. Suppose further that $D_g$ is contained in a smooth degree $d$ surface. If $D$ is $d$-regular (in the sense of Castelnuovo-Mumford regularity),
 then $L_D$ is non-reduced at the point corresponding to $D$. 
 \end{thm}
See Theorems \ref{phd05} and \ref{phd20} for more general results. See Examples \ref{de-23} and  \ref{de-21} for examples of extensions of curves and Theorem \ref{de-15} for a general criterion 
for extending curves.

Finally, we give an enumerative interpretation to the notion of extension of curves.
 Let $S, T \subset \p3$ be two disjoint sets of reduced, closed points in $\p3$.
Fix an integer $g \ge 0$. 
It is a classical question whether there exists a family of 
curves of genus $g$ passing through $S$, while avoiding the points in $T$.
This question is widely open. In this article, we consider an infinitesimal version of this question. In particular we ask,
does there exist a curve $E \subset \p3$ passing through $S$, such that for each $p \in S$ there exist a first order infinitesimal deformation of $E$ 
passing through $S\backslash\{p\}$ and avoiding $p$? If $E$ satisfies the property in the question, then we say that $E$ is \emph{first order} $S$-\emph{free}.
We then prove:
\begin{thm}[See Theorem \ref{de-18}]
 If the curve  $E$ is first order $S$-free, then for any curve $C$ with $C.E \subset S$, we have $C \cup E$ is a (simple) extension of $C$. 
\end{thm}

We now fix the notations and conventions we will use throughout the article.
\begin{note}
 For the rest of this article, a \emph{surface} will always mean a surface in $\p3$ and a \emph{curve} will mean a closed sub-scheme of $\p3$ of pure dimension $1$. Note a curve need not be reduced.
 Given a closed sub-scheme $Y$ of $\p3$, we denote by $Y_{\red}$ the associated reduced scheme. We denote by $\I_Y$ (resp. $I(Y)$) the ideal sheaf (resp. the ideal $\oplus_{n \in \mb{Z}} \Gamma(\I_Y(n))$) 
 of $Y$ in $\p3$. The ideal $I(Y)$ has a natural grading. Denote by $I_k(Y)$ the $k$-th graded piece of the ideal. Denote by $Q_d$ the Hilbert polynomial of a degree $d$ hypersurface in $\p3$. 
 Given a morphism $f:\mc{X} \to S$ and a sub-scheme $S_0 \subset S$, denote by $\mc{X}_{S_0}:=f^{-1}(S_0)$ the fiber over $S_0$.
\end{note}

{\emph{Acknowledgements}:} This research is part of the author's PhD thesis, under the supervision of Remke Kloosterman. 
I would like to thank him and Inder Kaur for reading the article and useful feedbacks. I thank Catriona Maclean
for suggesting the terminology of ``extension of curves''. I also thank Prof. J. O. Kleppe for showing interest 
in the article and for his useful remarks.
 During my PhD I was funded by the DFG Grant KL-2242/2-1.
The author is currently supported by ERCEA Consolidator Grant $615655$-NMST and also
by the Basque Government through the BERC $2014-2017$ program and by Spanish
Ministry of Economy and Competitiveness MINECO: BCAM Severo Ochoa
excellence accreditation SEV-$2013-0323$.

\section{Preliminaries on Hodge theory and flag Hilbert schemes}

We briefly recall the basic definition of flag Hilbert schemes and its tangent space in the setup we will use in this article. 
See \cite[\S $4.5$]{S1} for the general statements on this topic.

\begin{defi}
Given an $m$-tuple of numerical polynomials $\mathcal{P}(t)=(P_1(t),P_2(t),...,P_m(t))$, we define the  contravariant functor, called the \emph{Hilbert flag functor} 
relative to $\mathcal{P}(t)$,
\begin{eqnarray*}
FH_{\mathcal{P}(t)}:(\mbox{schemes}/\mb{C}) &\to& (\mbox{sets})\\
S &\mapsto& \left\{\begin{tabular}{l|l}
$(X_1,X_2,...,X_m)$& $X_1 \subset X_2 \subset ... \subset X_m \subset \mathbb{P}^3_S, \, X_i \mbox{ are } S-\mbox{flat}$\\
& $ \mbox{with fiberwise Hilbert polynomial } P_i(t)$
\end{tabular}\right\}
\end{eqnarray*}
\end{defi}

\begin{thm}\label{hf1}
 The functor  $FH_{\mathcal{P}(t)}$ is representable by a projective scheme,
denoted $H_{\mc{P}(t)}$, called \emph{the flag Hilbert scheme associated to } $\mc{P}(t)$.
In the case $m=1$, the tangent space $T_oH_{P_1}$ at a point $o \in H_{P_1}$, corresponding to a closed sub-scheme $X_1$ of $\p3$, is isomorphic
to $H^0(\N_{X_1|\p3})$.
In the case $m=2$, the tangent space $T_oH_{P_1,P_2}$ at a point $o \in H_{P_1,P_2}$, corresponding to a pair $(X_1,X_2)$ of closed sub-scheme of $\p3$, 
fits into the following Cartesian diagram:
\begin{equation}\label{dia2}
\begin{diagram}
&T_oH_{P_1,P_2} &\rTo^{\pr_2} &H^0(\N_{X_2|\mathbb{P}^3}) & \\
&\dTo^{\pr_1} & \square &\dTo^{\rho} & \\
&H^0(\N_{X_1|\mathbb{P}^3}) &\rTo^{\beta} &H^0(\N_{X_2|\mathbb{P}^3} \otimes \mathcal{O}_{X_1})
\end{diagram}
\end{equation} 
where $\rho$ is the canonical restriction and $\beta$ is obtained by applying $\Hc(-,\mo_{X_1})$ to the inclusion $\I_{X_2} \hookrightarrow \I_{X_1}$, followed by the global section functor.
\end{thm}


\begin{proof}
 See \cite[\S $4.5$]{S1} for a proof of the theorem.
\end{proof}

We will work in the following setup.

\begin{se}
 Let $X$ be a smooth degree $d$ surface in $\p3$, $C, E$ effective divisors on $X$ satisfying $E \le C$.
 Denote by $P_C$ (resp. $P_E$) the Hilbert polynomial of $C$ (resp. $E$).
 \end{se}

\begin{para}
Recall the following short exact sequence of normal sheaves:
 \begin{equation}\label{sh2}
  0 \to \N_{C|X} \to \N_{C|\p3} \to \N_{X|\p3}  \otimes \mo_C \to 0
 \end{equation}
 which arises from the short exact sequence
 \begin{equation}\label{ex0c}
 0 \to \I_X \to \I_C \to \mo_X(-C) \to 0
 \end{equation}
after applying $\Hc_{\p3}(-,\mo_C)$. We get a similar short exact sequence after replacing $C$ by $E$.
\end{para}

 The following diagram relates first order deformations of $E$ to that of $C$. This diagram plays an important role throughout this article.
 
 \begin{rem}
 We have the following commutative diagram:
 \begin{equation}\label{phe11}
  \begin{diagram}
                & &T_{(E,C)}H_{P_{E},P_C} &\rTo^{\pr_1} &H^0(\N_{E|\p3})\\
                & &\dTo^{\pr_2} &\square &\dTo_{\up{6}{E}{C}}\\
               T_{(C,X)}H_{P_C,Q_d}& \rTo^{\pr_1} &H^0(\N_{C|\p3})&\rTo^{\up{5}{E}{C}}&H^0(\N_{C|\p3}  \otimes \mo_{E})\\
               \dTo^{\pr_2} &\square &\dTo^{\beta_C}&\circlearrowleft&\dTo_{\up{1}{E}{C}}\\
                H^0(\N_{X|\p3}) &\rTo^{\rho_{_C}}&H^0(\N_{X|\p3} \otimes \mo_{C})&\rTo^{\up{2}{E}{C}}&H^0(\N_{X|\p3}  \otimes \mo_{E})
               \end{diagram}
  \end{equation}
 where $\rho_{_C}$, $\up{2}{E}{C} \mbox{ and } \up{5}{E}{C}$ are restriction morphisms, $\up{6}{E}{C}$ is obtained by applying the functor $\Hc_{\p3}(-,\mo_E)$ followed by the global section 
 functor to the natural morphism $\I_{C} \hookrightarrow \I_E$, $\up{1}{E}{C}$ (resp. $\beta_C$) arises from the short exact sequence \eqref{sh2}.
 The two Cartesian diagrams follow from the theory of flag Hilbert schemes (see Theorem \ref{hf1}).
 \end{rem}

 Let us now study some of the basic properties of this diagram. These will be used throughout this article. We first recall the following useful lemma:
 
 
  \begin{lem}[{\cite[Lemma $3.6$]{D3}}]\label{a4e}
Suppose that $d \ge \max\{\deg(C)+2,5\}.$
Then $h^0(\N_{C|X})=0.$ In particular, $\dim |C|=0$ where $|C|$ is the linear system associated to $C$.
\end{lem}

   \begin{lem}\label{nr13}
  Let $d \ge \deg(C)+4$. We have the following:
     \begin{enumerate}
   \item $\up{1}{E}{C} \circ \up{6}{E}{C}$ is the same as the morphism $\beta_{E}$ from $H^0(\N_{E|\p3})$
    to  $H^0(\N_{X|\p3}\otimes \mo_{E})$ arising from the short exact sequence (\ref{sh2}) (replace $C$ by $E$),
   \item $\up{2}{E}{C} \circ \rho_{_C}$ is the same as the natural restriction morphism $\rho_{_E}$
  from $H^0(\N_{X|\p3})$ to $H^0(\N_{X|\p3} \otimes \mo_{E})$,
  \item  the morphism $\up{6}{E}{C}$ is injective,
  \item  the fiber of the projection morphism from $H_{P_E,P_C}$ to $H_{P_C}$ over
   any closed reduced point is a discrete set of closed reduced points,
  \item the morphism $\beta_C$ is injective,
  \item the morphism $\up{1}{C_{\red}}{C}$ is injective. Moreover, for any reduced curve $E \le C_{\red}$, we have
    $H^0(\N_{C|X} \otimes \mo_E)=0$ i.e., $\up{1}{E}{C}$ is injective.
   \end{enumerate}          
    \end{lem}
               
   \begin{proof}
   The proof of $(1)$ and $(2)$ follows directly from definition.
   It follows from $(1)$ that the kernel of $\up{1}{E}{C} \circ \up{6}{E}{C}$ is $H^0(\N_{E|X})$.
    Lemma \ref{a4e} implies that $h^0(\N_{E|X})=0$. Therefore, $\up{1}{E}{C} \circ \up{6}{E}{C}$ is injective, hence $\up{6}{E}{C}$ is injective. 
    This proves $(3)$. 
    
     Note that the tangent space at $(E,C)$ to the fiber of $H_{P_E,P_C}$ is the
   kernel of the natural projection morphism $T_{(E,C)}H_{P_E,P_C} \to T_{C}H_{P_C}$. 
   It follows from the  diagram \eqref{phe11} this is isomorphic to $\ker \up{6}{E}{C}$.
   But $(3)$   implies $\ker \up{6}{E}{C}=0$. Hence, the fiber of the projection morphism from $H_{P_E,P_C}$ to $H_{P_C}$ over
   any closed reduced point is zero dimensional and reduced. This proves $(4)$.
   
   Since the morphism $\beta_C$ is induced by the short exact sequence \eqref{sh2},
   the kernel of $\beta_C$ is $H^0(\N_{C|X})$, the vanishing of which follows from Lemma \ref{a4e}. This proves $(5)$.
   
      Consider the long exact sequence of the following short exact sequence:
    \[0 \to \N_{C|X} \otimes \mo_{C_{\red}} \to \N_{C|\p3} \otimes \mo_{C_{\red}} \to \N_{X|\p3} \otimes \mo_{C_{\red}} \to 0\]
    obtained by pulling back to $C_{\red}$ the short exact sequence (\ref{sh2}). Then the kernel of the map $\up{1}{C_{\red}}{C}$ is $H^0(\N_{C|X} \otimes \mo_{C_{\red}})$.
    
    We show that the degree of the line bundle $\N_{C|X} \otimes \mo_{C_{\red}}$ restricted to each of the components is negative. 
    This would mean there do not exist global sections on any of the irreducible components, hence not on $C_{\red}$. 
    We can write $C=\sum_{i=1}^r a_iC_i$ in $\mr{Div}(X)$ with $C_i \not= C_j$ for $i \not= j$ and $C_i$ integral. It suffices to prove this for $C_1$.
    Note that \[C.C_1= a_1C_1^2+\sum_{i=2}^r a_iC_iC_1 \le a_1(2\rho_a(C_1)-2-(d-4)\deg(C_1))+\sum_{i=2}^r a_i\deg(C_i)\deg(C_1)\]
    which follows from the fact that $C_i.C_1 \le \deg(C_1)\deg(C_i)$ for $i \not= 1$ and the adjunction formula applied to $C_1^2$.
    Using the degree assumption on $d$ and the bound on the arithmetic genus of an integral curve in $\p3$ we then get
    {\small \[C.C_1 \le \left(\sum_{i=2}^r a_i\deg(C_i)\deg(C_1)\right)+a_1\left((\deg(C_1)-1)(\deg(C_1)-2)-2-\deg(C_1)\left(\sum_{i=1}^r a_i\deg(C_i)\right)\right)\]}
    which is clearly less than zero
    since $a_i \ge 1$ for all $i$. Hence, $h^0(\N_{C|X} \otimes \mo_{C_{\red}})=0$. This proves the first part of $(6)$.
    
    The second part is a direct consequence of the proof of the first part. In particular, we see that for any irreducible component of $C_{\red}$
    there does not exist a global section of the restriction of $\N_{C|X}$ to this component. So, for any $E \le C_{\red}$, $h^0(\N_{C|X} \otimes \mo_{E})=0$.
    
    Pulling back the short exact sequence (\ref{sh2}) to $E$, one can 
    observe as before that $\ker \up{1}{E}{C}=h^0(\N_{C|X} \otimes \mo_{E})=0$. Therefore, $\up{1}{E}{C}$ is injective. This proves $(6)$ and hence the lemma.
  \end{proof}

We now recall the basics of Hodge theory, again restricting to the situation relevant for this article. See \cite[\S $5$]{v5} for a detailed study 
of this subject.

\begin{note}
Denote by $U_d \subseteq \mathbb{P}(H^0(\mathbb{P}^3, \op))$ 
the open sub-scheme parametrizing smooth projective hypersurfaces in $\mathbb{P}^3$ of degree $d$. Denote by $Q_d$ the Hilbert polynomial of degree $d$ surfaces in $\p3$. 
Let \[\pi:\mathcal{X} \to U_d\] be the corresponding universal family. For a given $u \in U_d$, denote by $X_u$ the surface $X_u:=\pi^{-1}(u)$.
Fix a closed point $o \in U_d$, denote by $X:=X_o$ and consider a simply connected neighbourhood $U$ of $o$ in $U_d$  (in the analytic topology).
\end{note}

\begin{defi}
 As $U$ is simply connected, $\pi|_{\pi^{-1}(U)}$ induces a variation of Hodge structure $(\mathcal{H}^2, \nabla)$ on $U$ where $\mathcal{H}^2:=R^2\pi_*\mathbb{Z} \otimes 
\mathcal{O}_U$ and $\nabla$ is the Gauss-Manin connection. Note that $\mathcal{H}^2$ defines a local system on $U$ whose fiber over 
a point $u \in U$ is $H^2(X_u,\mathbb{C})$. Consider a non-zero element $\gamma_0 \in H^2(X,\mathbb{Z}) \cap H^{1,1}(X,\mathbb{C})$
such that $\gamma_0  \not= c_1(\mathcal{O}_{X}(k))$ for $k \in \mathbb{Z}_{>0}$. 
This defines a flat section $\gamma \in \Gamma(U,\mathcal{H}^2)$ which takes the value $\gamma_0$ at the point $o \in U$. 
Recall, there exists a sub-bundle $F^2\mc{H}^2 \subset \mc{H}^2$, which for any $u \in U$, can be identified with
the Hodge filtration $F^2H^2(X_u,\mb{C}) \subset H^2(X_u,\mb{C})$ (see \cite[\S $10.2.1$]{v4}).
Let $\overline{\gamma}$ be the image
of $\gamma$ in $\mathcal{H}^2/F^2\mathcal{H}^2$. The \emph{Hodge locus}, denoted $\NL(\gamma_0)$ is then defined as
\[ \NL(\gamma_0):=\{u \in U | \overline{\gamma}_u=0\},\]
where $\overline{\gamma}_u$ denotes the value at $u$ of the section $\overline{\gamma}$. One can check that 
the Hodge locus $\NL(\gamma_0)$ does not depend on the choice of the section $\gamma$.
\end{defi}


 \begin{lem}[{\cite[Lemma $5.13$]{v5}}]
 There is a natural analytic scheme structure on $\ov{\NL(\gamma_0)}$ (closure in $U_d$ in the Zariski topology).
\end{lem}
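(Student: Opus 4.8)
The plan is to endow $\ov{\NL(\gamma_0)}$ with the structure of the zero locus of the section $\ov{\gamma}$ of the bundle $\mc{H}^2/F^2\mc{H}^2$, which is an analytic vector bundle on the simply connected open set $U$. By definition, $\NL(\gamma_0) = \{u \in U \mid \ov{\gamma}_u = 0\}$, and the vanishing locus of a holomorphic section of a holomorphic vector bundle carries a canonical structure of closed analytic subscheme of $U$: locally, after trivializing the bundle, $\ov{\gamma}$ is given by a tuple of holomorphic functions $(f_1, \dots, f_r)$ on an open set of $\mb{C}^N$, and we take the analytic subscheme cut out by the ideal sheaf $(f_1, \dots, f_r)$. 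One checks this is independent of the trivialization, since a change of trivialization is given by an invertible holomorphic matrix, which does not change the ideal generated by the components. Thus $\NL(\gamma_0)$ acquires a well-defined analytic scheme structure on the neighborhood $U$ of $o$.

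The next step is to note that $o$ was an arbitrary point of $\NL(\gamma_0) \subset U_d$ (the construction of $\gamma$ and hence $\ov{\gamma}$ only required a simply connected neighborhood), so these local analytic scheme structures exist around every point of the Hodge locus. To get a scheme structure on all of $\ov{\NL(\gamma_0)}$, I would glue: on overlaps of two such simply connected neighborhoods $U$ and $U'$, the sections $\ov{\gamma}$ and $\ov{\gamma'}$ differ by multiplication by a nowhere-vanishing holomorphic function (this is exactly the ambiguity in the choice of $\gamma$, which takes a prescribed value at the base point up to scaling, combined with the flatness of the local system), so they cut out the same ideal sheaf on the overlap, and the local scheme structures agree there. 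Hence they patch to a global analytic scheme structure on a neighborhood of $\NL(\gamma_0)$ in $U_d$; taking the Zariski closure (which by a theorem — cf.\ the cited \cite[\S 5]{v5} or results on algebraicity of Hodge loci — is again analytic, indeed algebraic) and the reduced-to-closure argument gives the structure on $\ov{\NL(\gamma_0)}$.

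I expect the main obstacle to be the gluing/well-definedness verification: making precise that the two potential sources of ambiguity — the choice of holomorphic section $\gamma$ with $\gamma_o = \gamma_0$, and the choice of local trivialization of $\mc{H}^2/F^2\mc{H}^2$ — both only rescale the defining equations by units and hence leave the scheme structure invariant. One must also be careful that $\ov{\gamma}$ really is a holomorphic section: this follows because $\gamma$ is a flat (hence holomorphic) section of $\mc{H}^2$ and $F^2\mc{H}^2$ is a holomorphic subbundle, so the quotient map is holomorphic. A secondary point is passing from the analytic structure near $\NL(\gamma_0)$ to a structure on the full Zariski closure $\ov{\NL(\gamma_0)}$ in $U_d$; here one invokes that the Hodge locus is a countable union of closed algebraic subsets (the Cattani–Deligne–Kaplan theorem, implicit in \cite{v5}) so that its closure is well-behaved, and one takes the induced reduced or naturally-extended structure. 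Since this lemma is quoted verbatim from \cite[Lemma 5.13]{v5}, the cleanest route is to reference that proof, but the sketch above records the essential mechanism.
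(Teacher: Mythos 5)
The paper offers no proof of this lemma---it is quoted verbatim from Voisin \cite[Lemma 5.13]{v5}---so the comparison is with the standard argument there, and your sketch does reconstruct its essential mechanism correctly: $\ov{\gamma}$ is a holomorphic section of the holomorphic bundle $\mc{H}^2/F^2\mc{H}^2$ (holomorphic because $\gamma$ is flat and $F^2\mc{H}^2$ is a holomorphic subbundle), and the zero locus of such a section carries a canonical analytic subscheme structure, independent of the local trivialization since a change of frame multiplies the defining tuple by an invertible holomorphic matrix. That is exactly the content of the cited proof, and the deferral of the passage to the Zariski closure to the algebraicity results in \cite{v5} is appropriate.

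One step of your write-up is inaccurate, though it does not damage the conclusion. You assert that on an overlap $U\cap U'$ the two sections $\ov{\gamma}$ and $\ov{\gamma'}$ ``differ by multiplication by a nowhere-vanishing holomorphic function'' because $\gamma$ is ``prescribed at the base point up to scaling.'' In fact $\gamma$ is the \emph{unique} flat section of the local system on the simply connected $U$ taking the exact value $\gamma_0$ at $o$---there is no scaling ambiguity---and two flat continuations of $\gamma_0$ over different simply connected charts differ on an overlap by a monodromy transformation of the local system, which is in general not a scalar. This is precisely why the Hodge locus is a priori only locally defined and why one speaks of the component through $o$. The point is moot for the lemma as stated, because $\NL(\gamma_0)$ is defined once and for all as a subset of the \emph{fixed} simply connected neighbourhood $U$, so no gluing across charts is required to equip it with an analytic scheme structure; the only genuinely global input is the algebraicity of the closure, which you correctly attribute to the Cattani--Deligne--Kaplan circle of ideas. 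If you want the argument to be self-contained on the gluing point, either drop it or replace the ``units'' claim by the observation that the structure depends only on the germ of $\ov{\gamma}$ along $\NL(\gamma_0)\subset U$, which is fixed once $U$ and $\gamma_0$ are.
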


\begin{rem} 
We now discuss the tangent space to the Hodge locus $\NL(\gamma_0)$.
We know that the tangent space to $U$ at $o$, $T_oU$ is isomorphic to $H^0(\N_{X|\p3})$.
This is because $U$ is an open sub-scheme of the Hilbert scheme $H_{Q_d}$, the tangent space of which at the point $o$ is simply
$H^0(\N_{X|\p3})$. 
Given the variation of Hodge structure above, we have (by Griffith's transversality) the differential map:
\[\overline{\nabla}:H^{1,1}(X) \to \mathrm{Hom}(T_oU,H^2(X,\mathcal{O}_X))\]induced by the Gauss-Manin connection.
Given $\gamma_0 \in H^{1,1}(X)$ this induces a morphism, denoted $\overline{\nabla}(\gamma_0)$, from $T_oU$ to $H^2(\mo_X)$.
\end{rem}

\begin{lem}[{\cite[Lemma $5.16$]{v5}}]\label{tan1}
The tangent space at $o$ to $\NL(\gamma_0)$, denoted $T_o\NL(\gamma_0)$, equals $\ker(\overline{\nabla}(\gamma_0))$.
\end{lem}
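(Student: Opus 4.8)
The plan is to realise $\NL(\gamma_0)$, with the analytic scheme structure of \cite[Lemma $5.13$]{v5} --- which near $o$ is that of the zero scheme $Z(\overline{\gamma})$ --- as the zero scheme of a holomorphic section of a holomorphic vector bundle on $U$, and then to identify the differential of that section at $o$ with $\overline{\nabla}(\gamma_0)$. First I would record two basic facts. The section $\gamma \in \Gamma(U,\mathcal{H}^2)$ is flat, $\nabla\gamma = 0$, since it comes from a section of the local system $R^2\pi_*\mb{Z}$ over the simply connected base $U$ and such sections are precisely the flat sections of $(\mathcal{H}^2,\nabla)$. And $\NL(\gamma_0)$ is, near $o$, the zero scheme of $\overline{\gamma}$, the image of $\gamma$ in the holomorphic quotient bundle $\mathcal{H}^2/F^1\mathcal{H}^2$ (the level of the Hodge filtration at which the obstruction to $\gamma_0$ remaining a Hodge class lives): its fibre at $o$ is $H^2(X,\mb{C})/F^1H^2(X,\mb{C}) \cong H^{0,2}(X) = H^2(\mo_X)$, and indeed $\overline{\gamma}_o = 0$ because $\gamma_0 \in H^{1,1}(X) \subset F^1H^2(X)$.

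Second, I would invoke the standard fact that for a holomorphic section $s$ of a holomorphic vector bundle $\mathcal{E}$ on a complex manifold $M$ with $s(p)=0$, the Zariski tangent space to the zero scheme $Z(s)$ at $p$ equals $\ker(d_p s)$, where $d_p s\colon T_pM \to \mathcal{E}_p$ is the intrinsic derivative: choosing a local holomorphic frame one writes $s=(s_1,\dots,s_r)$ with $s_i \in \mo_{M,p}$ vanishing at $p$, so that $Z(s)$ is cut out by $s_1,\dots,s_r$ and $T_pZ(s) = \bigcap_i \ker(d_p s_i) = \ker(d_p s)$ (the map $d_p s$ being independent of the frame at a zero of $s$). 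Applied to $s = \overline{\gamma}$ this gives $T_o\NL(\gamma_0) = \ker(d_o\overline{\gamma})$ with $d_o\overline{\gamma}\colon T_oU \to H^2(\mo_X)$, so the whole statement reduces to showing $d_o\overline{\gamma} = \overline{\nabla}(\gamma_0)$ up to sign, which does not affect the kernel.

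Third --- this is where the actual content lies --- I would compute $d_o\overline{\gamma}$ in a flat trivialisation of $\mathcal{H}^2$ near $o$ (available since $U$ is simply connected), in which $\gamma$ is the constant section $\gamma_0 \in H^2(X,\mb{C})$ and $F^1\mathcal{H}^2$ is a holomorphically varying family of subspaces $F^1_u \subset H^2(X,\mb{C})$ with $F^1_o = F^1H^2(X)$. Fixing the complement $W = H^{0,2}(X)$ of $F^1_o$ and writing $F^1_u$ as the graph of a holomorphic map $\psi_u\colon F^1_o \to W$ with $\psi_o = 0$, a short linear-algebra computation identifies $\overline{\gamma}$, in the induced holomorphic trivialisation $\mathcal{H}^2/F^1\mathcal{H}^2 \cong U \times W$, with $u \mapsto -\psi_u(\gamma_0)$, so that $d_o\overline{\gamma}(\xi) = -(d_o\psi)(\xi)(\gamma_0)$ for $\xi \in T_oU$. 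It then remains to match the first-order variation $(d_o\psi)(\xi)$ of $F^1\mathcal{H}^2$ with Gauss--Manin data: Griffiths transversality $\nabla(F^2\mathcal{H}^2) \subset F^1\mathcal{H}^2\otimes\Omega^1_U$ forces $(d_o\psi)(\xi)$ to vanish on $F^2H^2(X) = H^{2,0}(X)$, while $\nabla(F^1\mathcal{H}^2) \subset \mathcal{H}^2\otimes\Omega^1_U$ makes $\nabla$ induce the $\mo_U$-linear map $\overline{\nabla}\colon F^1\mathcal{H}^2/F^2\mathcal{H}^2 \to (\mathcal{H}^2/F^1\mathcal{H}^2)\otimes\Omega^1_U$ whose value at $o$ restricts on $H^{1,1}(X) = (F^1/F^2)_o$ to $(d_o\psi)(\xi)|_{H^{1,1}(X)}$. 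Since $\gamma_0 \in H^{1,1}(X)$, this gives $d_o\overline{\gamma} = -\overline{\nabla}(\gamma_0)$ and hence $T_o\NL(\gamma_0) = \ker(\overline{\nabla}(\gamma_0))$. I expect this last identification to be the main obstacle: it requires both instances of Griffiths transversality --- one to discard the $(2,0)$-contribution of the Hodge filtration to first order, the other to recognise the first-order movement of $H^{1,1}$ into $H^{0,2}$ as precisely $\overline{\nabla}(\gamma_0)$ --- kept consistent with the identification $\mathcal{H}^2/F^1\mathcal{H}^2 \cong H^2(\mo_X)$.
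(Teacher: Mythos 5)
The paper gives no proof of this lemma beyond the citation to Voisin, and your argument is correct and is essentially the proof in that reference: flatness of the section $\gamma$ over the simply connected base, the identification of the Zariski tangent space of the zero scheme of a holomorphic section of a vector bundle with the kernel of its intrinsic derivative, and the two uses of Griffiths transversality to identify that derivative (in a flat trivialisation, via the graph description of the moving filtration) with $-\overline{\nabla}(\gamma_0)$. One point worth flagging: you correctly take the obstruction bundle to be $\mathcal{H}^2/F^1\mathcal{H}^2$, whereas the paper's definition literally reads $\mathcal{H}^2/F^2\mathcal{H}^2$ --- with the latter the reference point $o$ would not even lie in $\NL(\gamma_0)$, since a nonzero real class in $F^2H^2(X)\cap H^{1,1}(X)$ is impossible and the target of the differential would not be $H^2(\mo_X)$; so the paper's $F^2$ is a typo and your reading is the intended one.
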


We now discuss the semi-regularity map first introduced by Kodaira-Spencer in the case of divisors and then generalized to 
any local complete intersection sub-schemes by Bloch. The purpose of this map is to study certain aspects 
of the variational Hodge conjecture. 
We consider the cohomology class $\gamma$ of a curve $C$ in a smooth degree $d$ surface $X$ in $\p3$.
We see that the differential map $\ov{\nabla}(\gamma)$ factors through the semi-regularity map (see Theorem \ref{hf12}).
Using this description, we are able to identify a subspace of $T_o\NL(\gamma)$
parametrizing first order deformations $X'$ of $X$ such that $C$ deforms to a local complete intersection sub-scheme of $X'$ (see Corollary \ref{hf12c}).
 
We start with the definition of a semi-regular curve.
\begin{defi}
 Let $X$ be a smooth surface and $C \subset X$ a curve in $X$. 
Consider the natural short exact sequence:
 \begin{equation}\label{sh1}
 0 \to \mo_X \to \mo_X(C) \to \N_{C|X} \to 0.
                 \end{equation}                       
 The \emph{semi-regularity map} $\pi_C$ is the boundary map from $H^1(\N_{C|X})$ to $H^2(\mo_X)$, coming from this short exact sequence.
 We say that $C$ is \emph{semi-regular in } $X$ if $\pi_C$ is injective. 
\end{defi}



The following lemma gives a criterion for $C$ to be semi-regular.  
               
\begin{lem}[{\cite[Lemma $5.2$]{dan7}}]\label{hf11}
Let $C$ be a reduced curve and $X$ a smooth degree $d$ surface containing $C$. Then $H^1(\mo_X(-C)(k))=0$ for all $k \ge \deg(C)$. In particular, 
if $d \ge \deg(C)+4$, then $h^1(\mathcal{O}_X(C))=0$, hence $C$ is semi-regular.
\end{lem}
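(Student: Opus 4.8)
The plan is to reduce the first assertion to a Castelnuovo--Mumford regularity bound for the curve $C$ viewed inside $\p3$, and then to read off the ``in particular'' by a short Serre duality argument on $X$. For the reduction, note that as $X$ is smooth $C$ is an effective Cartier divisor on $X$, so $\mo_X(-C)$ is the ideal sheaf $\I_{C/X}$; since $\I_{X/\p3}\cong\mo_{\p3}(-d)$ sits inside $\I_{C/\p3}$, there is a short exact sequence of sheaves on $\p3$
\[ 0 \longrightarrow \mo_{\p3}(-d) \longrightarrow \I_{C/\p3} \longrightarrow \mo_X(-C) \longrightarrow 0, \]
the first map being multiplication by the equation of $X$. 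Twisting by $\mo_{\p3}(k)$ and using $H^1(\mo_{\p3}(m))=H^2(\mo_{\p3}(m))=0$ for every $m\in\mathbb{Z}$, the long exact sequence gives an isomorphism $H^1(\mo_X(-C)(k))\cong H^1(\I_{C/\p3}(k))$. Thus the first assertion does not involve $X$ at all, and it suffices to prove that $H^1(\I_{C/\p3}(k))=0$ for all $k\ge\deg(C)$ whenever $C\subset\p3$ is a reduced curve.

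To prove this, set $\delta:=\deg(C)$. Since a reduced curve is Cohen--Macaulay of pure dimension one, for a general hyperplane $H\subset\p3$ the section $\Gamma:=C\cap H$ is a reduced $0$-dimensional subscheme of $H\cong\mathbb{P}^2$ of length $\delta$, and one has the exact sequence
\[ 0 \longrightarrow \I_{C/\p3}(k-1)\xrightarrow{\cdot H}\I_{C/\p3}(k)\longrightarrow\I_{\Gamma/\mathbb{P}^2}(k)\longrightarrow 0. \]
Since a $0$-dimensional subscheme of $\mathbb{P}^2$ of length $\delta$ is $\delta$-regular, $H^1(\I_{\Gamma/\mathbb{P}^2}(k))=0$ for $k\ge\delta-1$. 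The remaining input (a ``seed'', i.e. a bound on $\mathrm{reg}\,\I_{C/\p3}$) I would obtain case by case: if $C$ is irreducible and nondegenerate, the theorem of Gruson--Lazarsfeld--Peskine gives $\mathrm{reg}\,\I_{C/\p3}\le\delta-1$; if $C$ lies in a plane, then $\I_{C/\p3}$ is generated by a plane and a degree-$\delta$ form, and its Koszul resolution shows $H^1(\I_{C/\p3}(k))=0$ for every $k$; the case of a line is immediate. For a reducible $C$ I would induct on the number of irreducible components using the Mayer--Vietoris sequence
\[ 0 \longrightarrow \I_{C/\p3} \longrightarrow \I_{C'/\p3}\oplus\I_{C''/\p3} \longrightarrow \I_{C'\cap C''/\p3} \longrightarrow 0 \]
attached to a decomposition $C=C'\cup C''$ into subcurves with no common component (the correction term being the finite scheme $C'\cap C''$, or $\mo_{\p3}$ when $C'$ and $C''$ are disjoint). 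I expect the bookkeeping in this inductive step to be the main obstacle --- one must track when the induced maps on global sections are surjective and control the degree beyond which the finite correction scheme imposes no new conditions --- but the fact that the lemma demands only $k\ge\delta$, one more than the sharp value $\delta-1$, should provide the margin required.

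Finally, for the ``in particular'': by adjunction for a smooth degree $d$ surface in $\p3$ one has $\omega_X\cong\mo_X(d-4)$, so Serre duality on $X$ gives $H^1(\mo_X(C))\cong H^1(\mo_X(-C)(d-4))^{\vee}$. If $d\ge\deg(C)+4$ then $d-4\ge\deg(C)$, whence $H^1(\mo_X(-C)(d-4))=0$ by the first part and so $h^1(\mo_X(C))=0$. Substituting this into the long exact cohomology sequence of (\ref{sh1}), whose relevant segment is $H^1(\mo_X(C))\to H^1(\N_{C|X})\xrightarrow{\pi_C}H^2(\mo_X)$, shows that the semi-regularity map $\pi_C$ is injective; hence $C$ is semi-regular.
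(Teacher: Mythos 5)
Your reduction of the first assertion to $H^1(\I_{C/\p3}(k))=0$ via the sequence $0\to\mo_{\p3}(-d)\to\I_{C/\p3}\to\mo_X(-C)\to 0$ is correct (this is exactly the paper's sequence (\ref{ex0c}), and since $H^1$ and $H^2$ of every line bundle on $\p3$ vanish, the claimed isomorphism $H^1(\mo_X(-C)(k))\cong H^1(\I_{C/\p3}(k))$ holds). Your derivation of the ``in particular'' clause is also exactly right: $\omega_X\cong\mo_X(d-4)$, Serre duality gives $H^1(\mo_X(C))\cong H^1(\mo_X(-C)(d-4))^{\vee}$, and the segment $H^1(\mo_X(C))\to H^1(\N_{C|X})\xrightarrow{\pi_C}H^2(\mo_X)$ of the long exact sequence of (\ref{sh1}) then yields injectivity of $\pi_C$. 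Note that the paper itself gives no proof of this lemma --- it is quoted from \cite[Lemma $5.2$]{D2} --- so the only comparison available is with the paper's surrounding toolkit.

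The gap is where you yourself flagged it: the vanishing $H^1(\I_{C/\p3}(k))=0$ for $k\ge\deg(C)$ when $C$ is reducible. The general-hyperplane-section sequence only propagates vanishing \emph{upward} from a seed degree, so the entire content of the statement is the seed, and your Mayer--Vietoris induction does not close with the available margin. Concretely, in $0\to\I_{C/\p3}\to\I_{C'/\p3}\oplus\I_{C''/\p3}\to\I_{C'\cap C''/\p3}\to 0$ the correction term is a finite scheme whose length can be as large as $\deg(C')\cdot\deg(C'')$ (e.g.\ for two coplanar curves with no common component), which exceeds $\deg(C)=\deg(C')+\deg(C'')$ already for two cubics; the crude bound ``a length-$\ell$ finite scheme is $\ell$-regular'' therefore does not give $H^1(\I_{C'\cap C''/\p3}(k))=0$ in the range $k\ge\deg(C)$, nor the surjectivity on global sections needed to kill $H^1(\I_{C/\p3}(k))$. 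This is precisely why the literature does not argue componentwise in this naive way: one needs Giaimo's theorem that a connected reduced curve of degree $e$ in $\p3$ is $e$-regular, together with Sidman's bound $\mathrm{reg}(I\cdot J)\le\mathrm{reg}(I)+\mathrm{reg}(J)$ to handle disconnected curves. Both are already packaged in the paper as Theorem \ref{cas1}, so the cleanest repair is to delete your case-by-case seed and the induction entirely and invoke Theorem \ref{cas1}: $\deg(C)$-regularity of $\I_{C/\p3}$ gives $H^1(\I_{C/\p3}(k))=0$ for all $k\ge\deg(C)-1$, which combined with your first reduction proves the lemma (with one degree to spare).
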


We then have the following results on the tangent space of the Hodge locus.

\begin{thm}[{\cite[Theorem $3.6$]{dan7}}]\label{hf12}
Let $X$ be a smooth degree $d$ surface and $C$ a curve in $X$. Let $\gamma=[C] \in H^{1,1}(X,\mb{Z})$, be the cohomology class of $C$.
 We then have the following commutative diagram
 \[\begin{diagram}
  & &  & &T_{(C,X)}H_{P,Q_d}&\rTo&H^0(X,\N_{X|\p3})&\rTo^{\ov{\nabla}(\gamma)}&H^2(X,\mo_X)\\
  & &  & &\dTo&\square&\dTo^{\rho_{_C}}&\circlearrowright&\uTo^{\pi_C}& \\
    0&\rTo&H^0(C,\N_{C|X})&\rTo^{\phi_C}&H^0(C,\N_{C|\p3})&\rTo^{\beta_C}&H^0(C,\N_{X|\p3} \otimes \mo_C)&\rTo^{\delta_C}&H^1(C,\N_{C|X}) 
       \end{diagram}\]
where the horizontal bottom row is the exact sequence associated to (\ref{sh2}), $\pi_C$ is the semi-regularity map and $\rho_{_C}$ is the natural
pull-back morphism.
\end{thm}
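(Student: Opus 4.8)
The first observation is that the left-hand square of the diagram requires nothing new: it is literally the Cartesian square of Theorem \ref{hf1} applied to the flag $C \subset X$, with $X_1 = C$ of Hilbert polynomial $P$ and $X_2 = X$ of Hilbert polynomial $Q_d$ --- one only has to note that $H^0(\N_{X_2|\p3} \otimes \mo_{X_1}) = H^0(C, \N_{X|\p3} \otimes \mo_C)$ and that the lower horizontal arrow of that square is exactly $\beta_C$. Likewise the bottom horizontal line is just the long exact cohomology sequence of (\ref{sh2}), which is left exact at $H^0(C, \N_{C|X})$ and is prolonged into $H^1(C,\N_{C|X})$ by $\delta_C$. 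So I would immediately reduce the theorem to the single identity of maps $H^0(X,\N_{X|\p3}) \to H^2(X,\mo_X)$,
\[ \ov{\nabla}(\gamma) \;=\; \pi_C \circ \delta_C \circ \rho_C , \]
which is where all the content lies; the commutativity of the rest of the diagram then follows formally.

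To prove this identity, my plan is to recognize both sides as the same obstruction class. On the Hodge-theoretic side: since $U_d$ is open in the Hilbert scheme $H_{Q_d}$ we have $T_oU_d = H^0(X,\N_{X|\p3})$, and the variation of Hodge structure is that of the universal family of degree $d$ surfaces, so Griffiths' formula for the derivative of the period map (see \cite[\S 10]{v4}, \cite[\S 5]{v5}) gives, for $v \in H^0(X,\N_{X|\p3})$,
\[ \ov{\nabla}(\gamma)(v) \;=\; \kappa_X(v) \lrcorner \gamma , \]
where $\kappa_X \colon H^0(X, \N_{X|\p3}) \to H^1(X, T_X)$ is the connecting map of the normal bundle sequence $0 \to T_X \to T_{\p3}|_X \to \N_{X|\p3} \to 0$ (so $\kappa_X(v)$ is the Kodaira--Spencer class of the abstract first order deformation of $X$ induced by $v$), and $\lrcorner$ is cup product followed by the contraction $H^1(T_X) \otimes H^1(\Omega^1_X) \to H^2(\mo_X)$, applied to $\gamma = [C] = c_1(\mo_X(C)) \in H^1(X, \Omega^1_X)$. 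Thus $\ov{\nabla}(\gamma)(v)$ is the obstruction to propagating the line bundle $\mo_X(C)$ --- equivalently, to keeping $[C]$ of type $(1,1)$ --- along $v$. On the deformation-theoretic side: by the Cartesian square of Theorem \ref{hf1} together with exactness of the bottom row, $\delta_C(\rho_C(v)) \in H^1(C, \N_{C|X})$ is precisely the obstruction to lifting $v$ to a first order deformation of the flag $C \subset X$ in $\p3$, i.e. the obstruction to carrying the Cartier divisor $C$ along $v$; and $\pi_C$, being the boundary map of the sequence (\ref{sh1}), converts it into the induced obstruction to extending $\mo_X(C)$. The underlying heuristic is that ``deform $C$'', ``extend $\mo_X(C)$'' and ``keep $[C]$ a Hodge class'' carry the same obstruction; the real work is to make the corresponding cohomology classes literally coincide.

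The plan for that is an explicit local (\v{C}ech) computation on an affine open cover $\{U_i\}$ of $\p3$ chosen so that $X \cap U_i$ and $C \cap U_i$ are affine. Fix a degree $d$ form $F$ cutting out $X$, a degree $d$ form $G$ representing $v$ under $\N_{X|\p3} \cong \mo_X(d)$, and local equations $s_i$ of $C$ in $X \cap U_i$ whose ratios $f_{ij} = s_i/s_j$ are the transition functions of $\mo_X(C)$, so that $\gamma$ is the class of the $1$-cocycle $\{d\log f_{ij}\}$ in $\Omega^1_X$. Then I would: (i) choose $\theta_i \in T_{\p3}(U_i \cap X)$ with $dF(\theta_i) = G$ under the identification $\N_{X|\p3} \cong \mo_X(d)$, so that $\kappa_X(v) = \{\theta_i - \theta_j\}$ and $\ov{\nabla}(\gamma)(v)$ is the class of the $2$-cocycle $\{(\theta_i - \theta_j) \lrcorner\, d\log f_{jk}\}$ in $\mo_X$; (ii) use the local form of (\ref{sh2}) obtained by applying $\Hc_{\p3}(-, j_{0_*}\mo_C)$ to (\ref{ex0c}) --- which on $U_i$ reads $0 \to (F) \to (F, \tilde s_i) \to \mo_X(-C)\cdot s_i \to 0$ for a lift $\tilde s_i$ of $s_i$ --- to compute $\delta_C(\rho_C(v))$ by locally lifting $\rho_C(v) = G|_C$ along $\N_{C|\p3} \twoheadrightarrow \N_{X|\p3} \otimes \mo_C$ and taking the coboundary, obtaining a $1$-cocycle in $\N_{C|X}$; (iii) apply $\pi_C$ by lifting this $1$-cocycle along $\mo_X(C) \twoheadrightarrow \N_{C|X}$ and taking the coboundary into $\mo_X$; (iv) compare the $2$-cocycle from (iii) with the one from (i) and, using $dF(\theta_i) = G$, the Leibniz rule and $s_i = f_{ij}s_j$, check that the two differ by a coboundary, hence represent the same class in $H^2(X,\mo_X)$.

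The main obstacle will be steps (ii)--(iv): pinning down precisely the local description of the arrows in (\ref{sh2}) (this means carefully unwinding $\Hc_{\p3}(-, j_{0_*}\mo_C)$ on (\ref{ex0c}) and dualizing the relevant conormal data), and then reconciling the several sign and normalization conventions involved --- the sign in Griffiths' formula, the normalization of $c_1$ through $d\log$, and the orientations of the connecting homomorphisms $\delta_C$ and $\pi_C$. Conceptually the identity is forced, because every class appearing is the one obstruction to carrying $C$ --- equivalently $\mo_X(C)$, equivalently its cohomology class --- along the first order deformation $v$ of $X$ in $\p3$; turning this into an equality of \v{C}ech $2$-cocycles is pure bookkeeping. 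If the cocycle bookkeeping proves too cumbersome, the alternative I would pursue is to build a single commutative diagram of short exact sequences of sheaves on $X$ relating (\ref{sh1}), (\ref{sh2}), the conormal sequence of $X$ in $\p3$, and the Atiyah extension of $\mo_X(C)$ (whose class is $\gamma$), and to read off the identity from naturality of the connecting homomorphisms; producing that diagram is the crux in this second approach.
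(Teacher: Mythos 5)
The paper offers no argument for this statement: it is imported verbatim from \cite[Theorem 4.8]{D2} and never proved here, so there is no in-paper proof to measure you against. Judged on its own terms, your reduction is exactly right: the left square is Theorem \ref{hf1} for the flag $C \subset X$ (with $X_1 = C$, $X_2 = X$, so the bottom arrow of that square is $\beta_C$), the bottom row is the $H^0$--$H^1$ portion of the long exact sequence of (\ref{sh2}), and the entire content of the theorem is the single identity $\ov{\nabla}(\gamma) = \pi_C \circ \delta_C \circ \rho_C$. Your identification of $\ov{\nabla}(\gamma)(v)$ with $\kappa_X(v) \lrcorner\, \gamma$ via Griffiths' formula, and of $\pi_C(\delta_C(\rho_C(v)))$ as the obstruction to extending $\mo_X(C)$ along $v$, is the standard and correct way to see why the identity must hold; it is the Kodaira--Spencer/Bloch semi-regularity picture.

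The genuine gap is that the proof stops exactly where the theorem begins. Steps (ii)--(iv) --- the local description of the arrows of (\ref{sh2}) obtained from (\ref{ex0c}), the \v{C}ech computation of $\delta_C \circ \rho_C$ and of $\pi_C$, and above all the verification that the resulting $2$-cocycle agrees with $\{(\theta_i - \theta_j)\lrcorner\, d\log f_{jk}\}$ up to coboundary --- are announced as ``pure bookkeeping'' but never performed, and for a commutativity statement of this kind that bookkeeping \emph{is} the proof. In particular the sign and normalization issues you defer (the sign in Griffiths' formula, the $d\log$ normalization of $c_1$, the orientations of $\delta_C$ and $\pi_C$) are precisely what could make the composite equal $-\ov{\nabla}(\gamma)$ rather than $\ov{\nabla}(\gamma)$ under an unlucky set of conventions; since the paper only ever uses the \emph{kernel} of $\ov{\nabla}(\gamma)$ (Lemma \ref{pr13}, Corollary \ref{hf12c}), a sign discrepancy would be harmless downstream, but it still has to be settled to claim the stated commutativity. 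Your fallback --- a commutative ladder of short exact sequences relating (\ref{sh1}), (\ref{sh2}), the normal bundle sequence of $X$ and the Atiyah extension of $\mo_X(C)$, followed by naturality of connecting maps --- is in fact the cleaner route and the one I would push to completion; but as written, neither route has been carried out, so the proposal is a correct and well-aimed plan rather than a proof.
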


\begin{cor}[{\cite[Corollary $3.7$]{dan7}}]\label{hf12c}
 Denote by $P$ the Hilbert polynomial of $C$. 
Then the tangent space $T_o\NL(\gamma)$ to $\NL(\gamma)$ at the point $o$ corresponding to $X$, satisfies the following:
\[T_o(\NL(\gamma)) \supset \rho_{_C}^{-1}(\Ima \beta_C)= \pr_2T_{(C,X)}H_{P,Q_d}.\]
Furthermore if $C$ is semi-regular, then we have equality
$T_o(\NL(\gamma))=\pr_2T_{(C,X)}H_{P,Q_d}$.
\end{cor}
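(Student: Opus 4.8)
The plan is to deduce everything formally from the commutative diagram of Theorem \ref{hf12}, using the Cartesian square of Theorem \ref{hf1} for the left-hand face and the identification $T_o\NL(\gamma)=\ker(\ov{\nabla}(\gamma))$ of Lemma \ref{tan1} for the right-hand face. I first record the identity $\pr_2 T_{(C,X)}H_{P,Q_d}=\rho_C^{-1}(\Ima\beta_C)$, which is immediate from the Cartesian property of the left square in that diagram: an element $w\in H^0(X,\N_{X|\p3})$ lies in the image of $\pr_2$ exactly when there is a $v\in H^0(C,\N_{C|\p3})$ with $\beta_C(v)=\rho_C(w)$, that is, exactly when $\rho_C(w)\in\Ima\beta_C$.

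For the inclusion $\rho_C^{-1}(\Ima\beta_C)\subseteq T_o\NL(\gamma)$, I would take $w\in H^0(X,\N_{X|\p3})$ with $\rho_C(w)\in\Ima\beta_C$. The bottom row of the diagram in Theorem \ref{hf12} is the long exact cohomology sequence of (\ref{sh2}), hence is exact at $H^0(C,\N_{X|\p3}\otimes\mo_C)$, so $\Ima\beta_C=\ker\delta_C$ and therefore $\delta_C(\rho_C(w))=0$. The commuting triangle on the right of that diagram gives $\ov{\nabla}(\gamma)=\pi_C\circ\delta_C\circ\rho_C$, so $\ov{\nabla}(\gamma)(w)=\pi_C(\delta_C(\rho_C(w)))=0$, and by Lemma \ref{tan1} this means $w\in\ker(\ov{\nabla}(\gamma))=T_o\NL(\gamma)$. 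Together with the identity of the previous paragraph this establishes the first assertion.

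For the refinement under semi-regularity I would prove the reverse inclusion $T_o\NL(\gamma)\subseteq\rho_C^{-1}(\Ima\beta_C)$. If $w\in\ker(\ov{\nabla}(\gamma))$, then $\pi_C(\delta_C(\rho_C(w)))=0$; since $C$ is semi-regular, $\pi_C$ is injective, so $\delta_C(\rho_C(w))=0$, whence $\rho_C(w)\in\ker\delta_C=\Ima\beta_C$, i.e.\ $w\in\rho_C^{-1}(\Ima\beta_C)$. Combined with the previous paragraph this yields the equality $T_o\NL(\gamma)=\pr_2 T_{(C,X)}H_{P,Q_d}$.

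There is no genuine obstacle here once Theorem \ref{hf12} is in hand: the whole argument is a single diagram chase. The only points that demand care are verifying exactness of the bottom row precisely at the term $H^0(C,\N_{X|\p3}\otimes\mo_C)$ (so that $\ker\delta_C=\Ima\beta_C$ holds with no ambient correction term) and keeping track of which projection of the flag tangent space is $\pr_1$ and which is $\pr_2$. The substantive input — that $\ov{\nabla}(\gamma)$ factors through the semi-regularity map as $\pi_C\circ\delta_C\circ\rho_C$ — is exactly the content of Theorem \ref{hf12}, which we are entitled to assume.
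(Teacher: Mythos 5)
Your proof is correct: the paper itself gives no argument for this corollary (it is quoted from \cite[Corollary 4.9]{D2}), and your diagram chase — reading off $\pr_2 T_{(C,X)}H_{P,Q_d}=\rho_C^{-1}(\Ima\beta_C)$ from the Cartesian square, using $\Ima\beta_C=\ker\delta_C$ and the factorization $\ov{\nabla}(\gamma)=\pi_C\circ\delta_C\circ\rho_C$ from Theorem \ref{hf12}, and invoking injectivity of $\pi_C$ for the converse — is exactly the intended derivation. No gaps.
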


 The following corollary tells us for which first order deformations of $X$, the cohomology classes 
 $[C]$ and $[E]$ remain Hodge.

               \begin{cor}\label{pr13}
              For all $t \in (\rho_{_C})^{-1}(\Ima \beta_C)$ (resp. $(\up{2}{E}{C} \circ \rho_{_C})^{-1}(\Ima \up{1}{E}{C} \circ \up{6}{E}{C})$), we have 
                 \[\overline{\nabla}([C])(t)=0 \, \, (\mbox{ resp. } \ov{\nabla}([E])(t)=0).\]
   Furthermore,  if $C$ is reduced and $\deg(X) \ge \deg(C)+4$, then $\overline{\nabla}([C])(t)=0$ if and only if $t \in (\rho_{_C})^{-1}(\Ima \beta_C)$.
               \end{cor}
               
               \begin{proof}
                Lemma \ref{nr13} implies $\up{2}{E}{C} \circ \rho_{_C}=\rho_{_E}$ and $\up{1}{E}{C} \circ \up{6}{E}{C}=\beta_{E}$. The first part of the corollary then follows from Corollary \ref{hf12c}.
                By Lemma \ref{hf11}, $C$ is semi-regular. Corollary \ref{hf12c} implies that $t \in (\rho_{_C})^{-1}(\Ima \beta_C)$ if and only if $\overline{\nabla}([C])(t)=0$. This proves the corollary.
               \end{proof}

\section{Hilbert schemes of non-reduced curves}

Fix the Hilbert polynomial $P$ of a curve in $\p3$. 
The aim of this section is to study certain topological aspects of the corresponding Hilbert scheme of curves $H_P$. 
Let $L$ be an irreducible component of $H_P$. We first prove that there exists a Hilbert polynomial $P_r$ such that 
every curve $D \in L$ contains a sub-curve $D' \subset D$ with Hilbert polynomial $P_r$ and for a general curve $D \in L$ (i.e., $D$ lies outside finitely many proper closed subsets of $L$),
$D_{\red}$ is of Hilbert polynomial $P_r$. 

\begin{prop}\label{phe8}
Let  $L$ be an irreducible component of $H_P$. There exists a Hilbert polynomial $P_r$ of a curve in $\p3$ 
and an irreducible component $L_r$ of $H_{P_r,P}$ such that:
\begin{enumerate}
 \item the natural projection $\pr_2$ maps $L_r$ surjectively (as topological spaces) to $L$,
 \item for any $u \in L_r$, the corresponding pair $(\mc{C}'_u,\mc{C}_u)$ satisfies $\mc{C}'_{u,\red}=\mc{C}_{u,\red}$,
 \item there exists a non-empty open subset $U_r$ of $L_r$ such that for all $u \in U_r$, the corresponding pair $(\mc{C}'_u,\mc{C}_u)$
 satisfies $\mc{C}'_u \cong \mc{C}'_{u,\red}$ i.e., $\mc{C}'_u$ is reduced.
\end{enumerate}
Furthermore, the choice of $P_r$ and $L_r$ satisfying the three properties is \emph{unique}.
\end{prop}

\begin{proof}
 Let $\mc{C}_L \xrightarrow{\pi} L$ be the universal family over $L$. There exists a morphism $\ov{\pi}:\mc{C}_{L,\red} \xrightarrow{\bar{\pi}} L_{\red}$ such that the following diagram
 is commutative,
 \[\begin{diagram}
    \mc{C}_{L,\red}&\rInto&\mc{C}_L\\
    \dTo^{\ov{\pi}}&\circlearrowleft&\dTo_{\pi}\\
    L_{\red}&\rInto&L
   \end{diagram}\]
   By \cite[Theorem $6.9.1$]{ega42}, there exists a nonempty open set $U \subset L_{\red}$ such that \[\ov{\pi}|_{\ov{\pi}^{-1}(U)}:\ov{\pi}^{-1}(U) \to U\] is flat.
   Then \cite[Theorem III.$9.9$]{R1} implies that every fiber of $\ov{\pi}|_{\ov{\pi}^{-1}(U)}$ has the same Hilbert polynomials, say $P_r$. 
   
  Denote by $\mc{C}'_U:=\ov{\pi}^{-1}(U)$ and $\mc{C}_U:=\pi^{-1}(U)$. Clearly, for all $u \in U$, $\mc{C}'_{u,\red}=\mc{C}_{u,\red}$. Denote by $\ov{\eta}_U$  the geometric 
  generic point of $U$ and $\mc{C}'_{\ov{\eta}_U}:=\ov{\pi}^{-1}(\ov{\eta}_U)$. Note that $\mc{C}'_{\ov{\eta}_U}$ is reduced (reducedness is preserved under flat base change in char. $0$).
  Since $\mc{C}'_U$ and $\mc{C}_U$ are flat over $U$ with fiberwise Hilbert polynomials $P_r$ and $P$, respectively, there exists an unique morphism 
  $\phi_U:U \to H_{P_r,P}$ such that the pull-back of the universal family over $H_{P_r,P}$ to $U$ is isomorphic to the pair $(\mc{C}'_U,\mc{C}_U)$.
  Let $L_r$ be an irreducible component of $H_{P_r,P}$ containing $\phi_U(\ov{\eta}_U)$. The uniqueness of $\phi_U$ implies that $\pr_2 \circ \phi_U$ is identify on $U$. In particular,
  $\pr_2 \circ \phi_U(\ov{\eta}_U)=\ov{\eta}_U$. Since $\pr_2$ is proper, we have that $\pr_2(L_r)$ is an irreducible, closed sub-scheme of $H_{P,\red}$ containing $\ov{\eta}_U$. As $\ov{\eta}_U$ is the 
  geometric generic point of $L_{\red}$, we conclude that $\pr_2(L_r)_{\red}=L_{\red}$ i.e., condition $(1)$ is satisfied.
  
  Note that geometric reducedness is an open property (see \cite[Theorem $12.2.4$]{ega43}). Hence, there exists a non-empty open neighbourhood $V \subset L_r$ containing $\phi_U(\ov{\eta}_U)$ such that for all
  $v \in V$, the corresponding pair $(\mc{C}'_v,\mc{C}_v)$ satisfies: $\mc{C}'_v$ is geometrically reduced i.e., condition $(3)$ is satisfied.
  We now prove condition $(2)$. Denote by $(\mc{C}'_{L_r},\mc{C}_{L_r})$  the universal family on $L_r$ and $\pi_r:\mc{C}_{L_r} \to L_r$ the natural morphism.
  Since $\pi_r$  is a flat morphism of finite type between noetherian schemes, \cite[Ex. III.$9.1$]{R1} implies that $\pi_r$ is open. 
   Hence, $\pi_r(\mc{C}_{L_r}-\mc{C}'_{L_r})$ is open. 
   Note that for all $u \in \phi_U^{-1}(V)$, we have 
   \[\mc{C}'_u \cong \mc{C}'_{u,\red} \cong \mc{C}_{u,\red}\]
   i.e., $\mc{C}_{u,\red}$ is of Hilbert polynomial $P_r$. 
   Therefore, for all $v \in \pr_2^{-1}(\phi_U^{-1}(V)) \cap V$, we have 
   \[\mc{C}'_v \subset \mc{C}_{v,\red} \, \mbox{ and the Hilbert polynomial of } \mc{C}_{v,\red} \mbox{ is } P_r\]
   (the first inclusion follows from $\mc{C}'_v$ being reduced).
   This implies for all $v \in \pr_2^{-1}(\phi_U^{-1}(V)) \cap V$, $\mc{C}'_v  \cong \mc{C}_{v,\red}$.
   In other words, $\pr_2^{-1}(\phi_U^{-1}(V)) \cap V \cap \pi_r(\mc{C}_{L_r}-\mc{C}'_{L_r})=\emptyset$. Since $L_r$ is irreducible and $\pr_2^{-1}(\phi_U^{-1}(V)) \cap V \not= \emptyset$,
   we conclude that $\pi_r(\mc{C}_{L_r}-\mc{C}'_{L_r})=\emptyset$. Therefore, for all $v \in L_r$, $\mc{C}'_{v,\red} \cong \mc{C}_{v,\red}$ i.e., 
   condition $(2)$ is satisfied. This proves the first part of the proposition. We now prove uniqueness.
   
   Note that the choice of $P_r$ is unique by property $(3)$, as the Hilbert polynomial of $C_{g,\red}$ for a general curve $C_g \in L$ is $P_r$.
   We now prove the uniqueness of $L_r$. In particular, if $L'_r$ is another irreducible component of 
   $H_{P_r,P}$ which maps surjectively to $L$ and contains a point of 
 the form $(C',C)$ with $C'$ reduced, then $L'_r=L_r$. Indeed, since geometric reducedness is an open property, 
 there exists an open subset $V \subset L'_r$ such that any point $v \in V$ correspond to a pair 
 $(\mc{C}'_v,\mc{C}_v)$ with $\mc{C}'_v$ reduced. Let $U \subset L$ be the open subset consisting of all points
 $u \in L$ such that for the corresponding curve $\mc{C}_u$, $\mc{C}_{u,\red}$ has Hilbert polynomial $P_r$.
 Since $L'_r$ is irreducible, $V \cap \pr_2^{-1}(U)$ is non-empty and for all $v \in V \cap \pr_2^{-1}(U)$,
 $\mc{C}'_v = \mc{C}'_{v,\red} \subset \mc{C}_{v,\red}$ and $\mc{C}'_v$ has the same Hilbert polynomial as $\mc{C}_{v,\red}$. Hence,
 $\mc{C}'_v=\mc{C}_{v,\red}$ for all $v \in V \cap \pr_2^{-1}(U)$. Then $(3)$ implies that $L'_r \cap L_r$ contains a non-empty open 
 subset of $V \cap \pr_2^{-1}(U)$, which is open, dense in both $L_r$ and $L'_r$, namely the set of points $v$ in $L$ and $L'$ of the form $(\mc{C}_{v,\red},\mc{C}_v)$.
 This implies that $L'_r=L_r$. This proves the proposition.
\end{proof}

\begin{defi}
 Let $C$ be a curve in $\p3$. We say that $C$ is $d$-\emph{embeddable} if there exists a smooth degree $d$ surface $X$ in $\p3$ containing $C$.
 The pair $(C,X)$ is called a $d$-\emph{embedded curve}. The \emph{Hilbert polynomial of  the $d$-embedded curve } $(C,X)$ is defined to be
 the Hilbert polynomial of $C$.
\end{defi}

The following lemma will play an important role in the proof of Theorem \ref{phe42} later.

\begin{lem}\label{pa22}
Let $C$ be a $d$-embeddable curve with Hilbert polynomial $P$. 
For any smooth degree $d$ surface $X$ containing
$C_{\red}$ there exists a curve $D \subset X$ with Hilbert polynomial $P$ and $D_{\red}=C_{\red}$.
 For every degree $d$ surface $X$ containing 
$C_{\red}$ there exists a curve $D$ in $X$ such that $D \in H_P$ and $D_{\red}=C_{\red}$.
\end{lem}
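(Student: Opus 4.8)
The plan is to exhibit $D$ as the divisor on $X$ given by the ``same local equations'' as $C$ had on its original surface, and then to check via Riemann--Roch on surfaces that this does not change the Hilbert polynomial. Since $C$ is $d$-embedded, fix a smooth degree $d$ surface $X_0$ with $C \subset X_0$; as $X_0$ is smooth, $C$ is an effective Cartier divisor on $X_0$, so there is a decomposition $C = \sum_{i=1}^r n_i\Gamma_i$ with $\Gamma_1,\dots,\Gamma_r$ the distinct reduced irreducible curves occurring in $C$ and each $n_i \ge 1$; reading off local equations, $C_{\red} = \bigcup_{i=1}^r\Gamma_i$ with its reduced structure. Now let $X$ be any smooth degree $d$ surface containing $C_{\red}$. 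Each $\Gamma_i$ is an integral closed subscheme of codimension one in the smooth, hence locally factorial, surface $X$, so $\Gamma_i$ is an effective Cartier divisor on $X$, and I would set
\[D := \sum_{i=1}^r n_i\Gamma_i \in \mathrm{Div}(X).\]
Then $D$ is a curve in $X \subset \p3$ with support $\bigcup_i\Gamma_i$, and reading off local equations once more gives $D_{\red} = C_{\red}$. So it only remains to show that $D$ has Hilbert polynomial $P$; granting this, $D \in H_P$, and since ``surface'' means smooth surface in $\p3$ under the standing convention, both assertions of the lemma follow.

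For the Hilbert polynomial, I would twist the structure sequence of the Cartier divisor $D \subset X$ by $\mo_{\p3}(k)$ to get $0 \to \mo_X(-D)(k) \to \mo_X(k) \to \mo_D(k) \to 0$, hence $P_D(k) = \chi(\mo_X(k)) - \chi(\mo_X(-D)(k))$. Writing $H$ for the hyperplane class on $X$, so that $\mo_X(-D)(k) = \mo_X(kH-D)$, Riemann--Roch on the smooth surface $X$ gives $\chi(\mo_X(kH-D)) = \chi(\mo_X) + \tfrac{1}{2}(kH-D)\cdot(kH-D-K_X)$, a polynomial in $k$ determined by $\chi(\mo_X)$, $K_X$, $H^2$, $H\cdot D$ and $D^2$. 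Here $\chi(\mo_X)$, $K_X = (d-4)H$ and $H^2 = d$ depend only on $d$, and $H\cdot D = \deg D = \sum n_i\deg\Gamma_i = \deg C$. The one thing needing an argument is that $D^2$ is likewise independent of the choice of $X$: writing $D^2 = \sum_{i,j}n_in_j(\Gamma_i\cdot_X\Gamma_j)$, for $i\ne j$ the number $\Gamma_i\cdot_X\Gamma_j$ is the length of the scheme-theoretic intersection $\Gamma_i\cap\Gamma_j$ formed in $X$, and since $\I_X\subset\I_{\Gamma_i}$ and $\I_X\subset\I_{\Gamma_j}$ this intersection scheme, hence its length, is the same whether it is formed in $X$, in $X_0$, or in $\p3$; and for $i=j$ the adjunction formula on $X$ gives $\Gamma_i^2 = 2p_a(\Gamma_i) - 2 - K_X\cdot\Gamma_i = 2p_a(\Gamma_i) - 2 - (d-4)\deg\Gamma_i$, in which $p_a(\Gamma_i) = 1 - \chi(\mo_{\Gamma_i})$ is an intrinsic invariant of the curve $\Gamma_i$. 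Running the identical computation for $C\subset X_0$ produces the same numbers, so $\chi(\mo_X(-D)(k)) = \chi(\mo_{X_0}(-C)(k))$ for all $k$; together with $\chi(\mo_X(k)) = \chi(\mo_{X_0}(k)) = Q_d(k)$ this gives $P_D(k) = Q_d(k) - \chi(\mo_{X_0}(-C)(k)) = \chi(\mo_C(k)) = P(k)$, as desired.

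The only real obstacle is this invariance of the self-intersections $\Gamma_i^2$ under change of the ambient smooth surface; the adjunction formula handles it once one observes that $p_a(\Gamma_i)$ is intrinsic and that the canonical class of a degree $d$ surface in $\p3$ is $(d-4)H$. One could instead peel off the components $\Gamma_i$ one layer at a time, reducing everything to Euler characteristics of invertible sheaves on the $\Gamma_i$, but the bookkeeping still comes down to exactly this adjunction input. Everything else is routine Riemann--Roch.
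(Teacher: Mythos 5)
Your proof is correct and follows essentially the same route as the paper: both take the decomposition $C=\sum_i n_i\Gamma_i$ on the original smooth surface, form the same divisor on the new surface $X$, and use the adjunction formula (plus $K_X=(d-4)H$ and the ambient-independence of the pairwise intersection lengths) to see that degree and arithmetic genus, hence the Hilbert polynomial, are unchanged. The only difference is presentational — the paper compresses the Riemann--Roch bookkeeping into the single sentence ``using the adjunction formula, the arithmetic genus of $C'$ is the same as $C$,'' whereas you spell out exactly which intersection numbers must be checked to be independent of the ambient surface.
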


\begin{proof}
Let $X$ be a smooth degree $d$ surface containing $C$ and $C=\sum_i a_iC_i$ for $a_i>0$ and $C_i$ integral curves. 
Clearly, $\deg(C)$ does not depend on the surface containing it.
If $Y$ is another smooth degree $d$ surface containing $C_{\red}$, then the arithmetic genus of the divisor $\sum_i a_iC_i$ of $Y$ is the same
as that of $C$ (use the adjunction formula). This proves the lemma.
\end{proof}

  Recall, the following result on Castelnuovo-Mumford regularity:
  \begin{thm}\label{cas1}
   Let $C$ be a reduced curve in $\p3$ of degree $e$. Then $C$ is $e$-regular.
  \end{thm}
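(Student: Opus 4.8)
The plan is to unwind the definition and check that $H^i(\mathbb{P}^3,\mathcal{I}_C(e-i))=0$ for $i=1,2,3$, reducing everything to the classical behaviour of finite sets of points under a general hyperplane section. Two of the three vanishings are soft. Since $\dim C=1$, the structure sequence $0\to\mathcal{I}_C\to\mathcal{O}_{\mathbb{P}^3}\to\mathcal{O}_C\to 0$ gives $H^3(\mathcal{I}_C(k))\cong H^3(\mathcal{O}_{\mathbb{P}^3}(k))$ and $H^2(\mathcal{I}_C(k))\cong H^1(\mathcal{O}_C(k))$; the first vanishes at $k=e-3$ because $e-3\ge -3$, and for the second it suffices to prove $H^1(\mathcal{O}_C(e-2))=0$. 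For this I would cut with a general hyperplane $H$: then $\Gamma:=C\cap H\subset H\cong\mathbb{P}^2$ is a reduced set of $e$ points (a general hyperplane meets the dense smooth locus of the reduced curve $C$ transversally and avoids its finitely many singular points), the equation of $H$ is a non-zerodivisor on $\mathcal{O}_C$, and $0\to\mathcal{O}_C(k-1)\to\mathcal{O}_C(k)\to\mathcal{O}_\Gamma\to 0$ is exact. Using that $e$ reduced points impose independent conditions on forms of degree $\ge e-1$ (for each point, multiply $e-1$ hyperplanes through the other points and missing it), the composite $H^0(\mathcal{O}_{\mathbb{P}^3}(k))\to H^0(\mathcal{O}_C(k))\to H^0(\mathcal{O}_\Gamma)$ — which factors through the surjection $H^0(\mathcal{O}_H(k))\to H^0(\mathcal{O}_\Gamma)$ — is surjective for $k\ge e-1$; hence $H^1(\mathcal{O}_C(k-1))\xrightarrow{\ \sim\ }H^1(\mathcal{O}_C(k))$ for $k\ge e-1$, and chaining these isomorphisms up to Serre vanishing yields $H^1(\mathcal{O}_C(e-2))=0$, i.e. $H^2(\mathcal{I}_C(e-2))=0$.

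The remaining vanishing $H^1(\mathbb{P}^3,\mathcal{I}_C(e-1))=0$ — equivalently, that $C$ is $(e-1)$-normal — is the crux, and I would obtain it from the classical fact that a reduced curve of degree $e$ in $\mathbb{P}^3$ is $(e-1)$-normal. For an integral curve this is a weak form of the Gruson--Lazarsfeld--Peskine regularity theorem, which gives $\mathrm{reg}(\mathcal{I}_{C_1})\le\deg(C_1)$ (so $C_1$ is cut out scheme-theoretically in degree $\deg(C_1)$); the reducible reduced case then follows by induction on the number of irreducible components, via the Mayer--Vietoris sequence $0\to\mathcal{I}_{C_1\cup C_2}\to\mathcal{I}_{C_1}\oplus\mathcal{I}_{C_2}\to\mathcal{I}_{C_1\cap C_2}\to 0$, using a general degree-$\deg(C_1)$ surface through a component $C_1$ to bound and control the regularity of the finite scheme $C_1\cap C_2$. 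Alternatively one can run the Gruson--Lazarsfeld--Peskine argument itself — projecting $C$ from a general point onto a plane curve $\bar C$ of degree $e$, which is $e$-regular as a plane curve, and comparing regularities along the secant-line scroll — directly for a reduced $C$.

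Granting this, collecting $H^1(\mathcal{I}_C(e-1))=H^2(\mathcal{I}_C(e-2))=H^3(\mathcal{I}_C(e-3))=0$ is exactly $e$-regularity. The main obstacle is precisely the $(e-1)$-normality of reduced (possibly reducible, possibly degenerate) curves: this step is genuinely non-formal, because the general-hyperplane-section machinery only propagates a normality/regularity statement from one twist to the next and cannot on its own produce the initial vanishing — one must feed in the Gruson--Lazarsfeld--Peskine (or Castelnuovo) bound for irreducible curves together with the bookkeeping needed to pass to reducible ones, whereas the $i=2,3$ cases and the reduction above are elementary.
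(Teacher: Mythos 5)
Your overall architecture is sound and your reductions for $i=2,3$ are correct, but you take a genuinely different route from the paper, and the route you choose concentrates all the difficulty in a step you only sketch. The paper's proof is two citations: Giaimo's theorem that a \emph{connected} reduced curve of degree $e$ is $e$-regular, plus the theorem (cited as \cite[Theorem 1.8]{sid}) that $\mathrm{reg}(I\cdot J)\le\mathrm{reg}(I)+\mathrm{reg}(J)$; decomposing $C$ into its connected components makes the pieces pairwise disjoint, so their ideal sheaves multiply to $\mathcal{I}_C$ and no intersection bookkeeping is needed at all. You instead decompose into \emph{irreducible} components, feed in Gruson--Lazarsfeld--Peskine for each, and propose a Mayer--Vietoris induction --- which forces you to control $\mathrm{reg}(\mathcal{I}_{C_1\cap C_2})$ for components that actually meet. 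That is where your write-up stops being a proof: the naive bound ($C_1\cap C_2$ has length at most $\deg(C_1)\deg(C_2)$, and a finite scheme of length $\ell$ is only $\ell$-regular in general) is far too weak, so the phrase ``bound and control the regularity of the finite scheme $C_1\cap C_2$'' is carrying the entire theorem. The step can be closed, but only by combining two things you do not state: (i) by induction $\mathcal{I}_{C_1}(e_1)$ is globally generated, so a general $S\in|H^0(\mathcal{I}_{C_1}(e_1))|$ contains no component of $C_2$ and cuts a Cartier divisor $S\cap C_2\supseteq C_1\cap C_2$ with $\mathcal{I}_{S\cap C_2/C_2}\cong\mathcal{O}_{C_2}(-e_1)$; and (ii) your own hyperplane-section vanishing $H^1(\mathcal{O}_{C_2}(m))=0$ for $m\ge\deg(C_2)-2$, which then gives $H^1(\mathcal{I}_{S\cap C_2}(k))=0$ for $k\ge e_1+e_2-2$, hence $(e_1+e_2-1)$-regularity of $C_1\cap C_2$ (regularity of finite schemes passes to subschemes), and finally $\mathrm{reg}(\mathcal{I}_{C_1\cup C_2})\le e_1+e_2$ from the exact sequence. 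You also need to handle the degenerate irreducible base case (a plane curve is a complete intersection of type $(1,e)$, hence exactly $e$-regular), since GLP as stated assumes nondegeneracy.

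In short: your approach is more self-contained and more elementary in spirit (it rederives from GLP what the paper imports from Giaimo, and it avoids the product-of-ideals regularity theorem), but as written the inductive step is a plan rather than an argument, and it is precisely the non-formal content. The paper's choice of connected components over irreducible components is what lets it skip that work entirely.
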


  \begin{proof}
   If $C$ is connected, then it is $e$-regular (\cite[Main Theorem]{gi1}). Note that \cite[Theorem $1.8$]{sid} states that for homogeneous ideals $I, J$ in some polynomial ring, 
   the regularity of $I.J$ is at most the sum of the regularity of $I$ and $J$.
   This implies that if $C=C_1 \cup  ... \cup  C_n$ and $C_i$ are the connected components, then the regularity of the ideal of $C$ is at most the sum of the regularity of the 
   ideals of $C_i$ for $i=1,...,n$. The theorem then follows.   
  \end{proof}

  \begin{lem}\label{de-13}
  Let $C$ be a $d$-embeddable curve for some $d \ge \deg(C_{\red})+4$ and $P_r$ be the Hilbert polynomial of $C_{\red}$. Then  $C_{\red}$ is the only sub-curve of 
  $C$ with Hilbert polynomial $P_r$.
  \end{lem}

  \begin{proof}
   Let $X$ be a smooth degree $d$ surface containing $C$. Then $C$ is of the form $\sum_{i=1}^r a_i C_i$ for some $r>0, a_i>0$ and $C_i$ integral curves.
   Let $C'$ be a sub-curve of $C$ with Hilbert polynomial $P_r$. Then $C'=\sum_{i=1}^r (a_i-b_i)C_i$ with $a_i \ge b_i \ge 0$.
   By assumption, $\rho_a(C')=\rho_a(C_{\red})$ and $\deg(C')=\deg(C_{\red})$. By adjunction formula, this implies $C'^2=C_{\red}^2$. Note that $C'^2-C_{\red}^2$ is equal to 
\[\sum\limits_{i,j} ((a_i-b_i)(a_j-b_j)-1)C_iC_j=\left(\sum\limits_{i=1}^r ((a_i-b_i)^2-1)C_i^2\right)+2\left(\sum\limits_{i<j} ((a_i-b_i)(a_j-b_j)-1)C_iC_j\right).\]
 By the adjunction formula and the assumption on $d$, we have \[C_i^2=2\rho_a(C_i)-2-(d-4)\deg(C_i) \le 2\rho_a(C_i)-2-\deg(C_i)\sum\limits_{j=1}^r \deg(C_j).\]
 Note that the arithmetic genus of $C_i$ is bounded above by the arithmetic genus of a complete intersection curve of degree $\deg(C_i)$ i.e., $\rho_a(C_i) \le (\deg(C_i)-1)(\deg(C_i)-2)/2$.
 Hence, \[C_i^2 \le (\deg(C_i)-1)(\deg(C_i)-2)-2-\deg(C_i)\sum\limits_j \deg(C_j)<-\deg(C_i)\sum\limits_{j, j \not= i} \deg(C_j).\]
 Since $C_i.C_j \le \deg(C_i)\deg(C_j)$, we then obtain
 \[C'^2-C_{\red}^2 < -\left(\sum\limits_i \left(((a_i-b_i)^2-1) \deg(C_i)\sum\limits_{j, j \not= i} \deg(C_j)\right)\right)+\]\[+2\left(\sum\limits_{i<j} ((a_i-b_i)(a_j-b_j)-1)\deg(C_i)\deg(C_j)\right).\]
 Since $((a_i-b_i)^2-1)+((a_j-b_j)^2-1)-2((a_i-b_i)(a_j-b_j)-1) \ge 0$, we can immediately conclude that 
  \[C'^2-C_{\red}^2 \le 0,\] with equality if and only if $a_i=b_i+1$ for all $i$ i.e., $C'=C_{\red}$.
 This proves the lemma.
  \end{proof}

\begin{note}\label{de-4}
 Notations as in Proposition \ref{phe8}. Denote by $\mc{N}:=(P_r,P,Q_d)$ the ordered triple of Hilbert polynomials. For $1 \le i<j \le 3$, denote by 
 $\mc{N}_{ij}$ the ordered pair consisting of the $i$-th and the $j$-th coordinate of $\mc{N}$. Similarly, denote by $\mc{N}_i$ the $i$-th coordinate of $\mc{N}$.
 Denote by $(123 \to ij)$ (resp. $(123 \to i)$) the natural projection map from $H_{\mc{N}}$ to $H_{\mc{N}_{ij}}$ (resp. $H_{\mc{N}_i}$). Denote by 
 $(ij \to k)$ the natural projection morphism from $H_{\mc{N}_{ij}}$ to $H_{\mc{N}_k}$. 
\end{note}

\begin{nota}\label{de-1}
Fix an integer $d>0$.
 Let $L \subset H_P$ be an irreducible component of $H_P$ such that a general element of $L$ is a $d$-embeddable curve.
 Let $P_r$ and $L_r$ be as in Proposition \ref{phe8}.
 Then  a general element of $L_r$ is of the form $(C_{g,\red},C_g)$.
 The fiber over the generic point $(C_{g,\red},C_g)$ of $L_r$ to the morphism $(123 \to 12)$ is isomorphic to $\mb{P}(I_d(C_g))$ (i.e.,
 the space of degree $d$ surfaces containing $C_g$).
 Since $\mb{P}(I_d(C_g))$ is irreducible, there exists an \emph{unique} irreducible component $L_{rs}$ of $H_{\mc{N}}$ which maps 
 surjectively (as topological spaces) to $L_r$ via the morphism $(123 \to 12)$.
\end{nota}

We now prove the main theorem of this section. The dimension computation in the theorem plays an important role in the proof of Theorems \ref{tr1} and \ref{phd20}.

\begin{thm}\label{phe42}
 Let $L, L_r$ be as in Notations/Remark \ref{de-1}. Assume that a general element $C_g$ of $L$ is $d$-embeddable for some $d \ge \deg(C_{g})+4$.
 Let $u \in (12 \to 1)(L_r)$ correspond to a reduced, $d$-embeddable curve $\mc{C}'_u$. 
 Let $(\mc{C}'_u,\mc{C}_{u,g})$ be a general element of $(12 \to 1)^{-1}(u) \cap L_r$.
 Then 
 \[\dim ((12 \to 1)^{-1}(u) \cap L_r)=\dim I_d(\mc{C}'_u)-\dim I_d(\mc{C}_{u,g})=\dim I_d(\mc{C}'_u)-\dim I_d(C_g).\]
\end{thm}

\begin{proof}
 Let $L_{rs}$ be the irreducible component of $H_{\mc{N}}$ mentioned in Notations/Remark \ref{de-1}. Since $(123 \to 12)|_{L_{rs}}$ maps 
 surjectively to $L_r$, we have 
 \begin{equation}\label{de-11}
  (12 \to 1)^{-1}(u) \cap L_r=(12 \to 1)|_{L_r}^{-1}(u)=(123 \to 12)|_{L_{rs}}((123 \to 1)|_{L_{rs}}^{-1}(u)).
 \end{equation}
 By Proposition \ref{phe8}, a general element of $L_r$ is of the form $(C_{g,\red},C_g)$. By assumption, $C_g$ is $d$-embeddable.
 Then  Lemma \ref{pa22} implies that every smooth degree $d$ surface containing $C_{g,\red}$ contains a curve $D$ with Hilbert polynomial $P$
 and $D_{\red}=C_{g,\red}$. Since $C_g$ is general, Proposition \ref{phe8} implies that all such pairs $(C_{g,\red},D)$ lie in $L_r$ (use the uniqueness of $L_r$).
 As $L_{rs}$ is the unique irreducible component of $H_{\mc{N}}$ that maps surjectively to $L_r$, one can check that the fiber 
 over all such pairs $(C_{g,\red},D)$, for the morphism $(123 \to 12)|_{L_{rs}}$ parametrizes all degree $d$ surfaces containing $D$.
 This immediately implies that $(123 \to 1)|_{L_{rs}}^{-1}(C_{g,\red})$ parametrizes all degree $d$ surfaces containing $C_{g,\red}$.
 More precisely,
 \[(123 \to 13)((123 \to 1)|_{L_{rs}}^{-1}(C_{g,\red})) \cong \mb{P}(I_d(C_{g,\red})).\]
 By Theorem \ref{cas1}, the Castelnuovo-Mumford regularity of $C_{g,\red}$ is $\deg(C_{g,\red})$. Since $d \ge \deg(C)+4$, we therefore have 
 $\dim \mb{P}(I_d(C_{g,\red}))=P_r(d)-1$ (the Hilbert polynomial of $C_{g,\red}$ is $P_r$).

 Denote by $T_0:=(123 \to 13)(L_{rs})$, $T_1:=(123 \to 1)(L_{rs})$ and $F:=(123 \to 1)|_{L_{rs}}^{-1}(u)$. Then  the induced natural morphism $(13 \to 1)|_{T_0}:T_0 \to T_1$ is dominant.
 Since $C_{g,\red}$ is a general element of $T_1$, the observation above can be rewritten as: the generic fiber of $(13 \to 1)|_{T_0}$ is 
 of dimension $P_r(d)-1$. By the upper semicontinuity of fiber dimension, this implies 
 \[\dim (13 \to 1)|_{T_0}^{-1}(u) \ge P_r(d)-1.\]
 Now, $(13 \to 1)|_{T_0}^{-1}(u)$ is isomorphic to a subspace of $\mb{P}(I_d(\mc{C}'_u))$.
 Since $\dim I_d(\mc{C}'_u)=P_r(d)$ (as before $\mc{C}'_u$ is $d$-regular),
 we conclude that $\dim (13 \to 1)|_{T_0}^{-1}(u) = P_r(d)-1$.
 Since $\mc{C}'_u$ is $d$-embeddable, for a general element $(\mc{C}'_u,X_{u,g})$ in $(13 \to 1)|_{T_0}^{-1}(u)$, we have $X_{u,g}$ is a smooth, degree $d$ surface containing $\mc{C}'_u$.
 Using the adjunction formula, one can check that there are finitely many curves $D$ in $X_{u,g}$ with Hilbert polynomial $P$ and $D_{\red} \cong \mc{C}'_u$. 
  This implies that the generic fiber of \[(123 \to 13)|_F:F \to (13 \to 1)|_{T_0}^{-1}(u)\] is zero dimensional.
  Therefore, \[\dim F=\dim (13 \to 1)|_{T_0}^{-1}(u)=P_r(d)-1.\]
  Since $(123 \to 13)(F) \cong \mb{P}(I_d(\mc{C}'_u))$, we conclude that
  for a general element \[(\mc{C}'_u,\mc{C}_{u,g}) \in (12 \to 1)|_{L_r}^{-1}(u),\] $(123 \to 12)|_F^{-1}(\mc{C}'_u,\mc{C}_{u,g})$
  parametrizes all degree $d$ surfaces containing $\mc{C}_{u,g}$.
  In other words, the generic fiber of $(123 \to 12)|_F$ is isomorphic to $\mb{P}(I_d(\mc{C}_{u,g}))$. Hence, \eqref{de-11} implies that 
  \begin{equation}\label{de-12}
           \dim (12 \to 1)^{-1}(u) \cap L_r=\dim F-\dim \mb{P}(I_d(\mc{C}_{u,g}))=P_r(d)-\dim I_d(\mc{C}_{u,g}),
          \end{equation}
which is equal to $\dim I_d(\mc{C}'_u)-\dim I_d(\mc{C}_{u,g})$.

We will now prove the second equality. This is equivalent to proving $\dim \mb{P}(I_d(\mc{C}_{u,g}))$ equals $\dim \mb{P}(I_d({C}_g))$. By the upper semicontinuity
 of fiber dimension, we know $\dim \mb{P}(I_d(\mc{C}_{u,g})) \ge \dim (I_d(C_g))$. 
 Using the same arguments as before, one can check that 
 \[\dim (12 \to 1)|_{L_r}^{-1}(C_{g,\red})=P_r(d)-1-\dim \mb{P}(I_d(C_g)).\]
 By \eqref{de-12} and the upper semicontinuity of fiber dimension, we then have  
 {\small \[P_r(d)-1-\dim \mb{P}(I_d(C_g))=\dim (12 \to 1)|_{L_r}^{-1}(C_{g,\red}) \le \dim (12 \to 1)|_{L_r}^{-1}(u)=P_r(d)-1-\dim \mb{P}(I_d(\mc{C}_{u,g})).\]}
 This gives the reverse inequality, $\dim \mb{P}(I_d(C_g)) \ge \dim \mb{P}(I_d(\mc{C}_{u,g}))$. Hence, $\dim \mb{P}(I_d(C_g))$ equals $\dim \mb{P}(I_d(\mc{C}_{u,g}))$.
 This proves the theorem.
\end{proof}

   \section{infinitesimal deformation of non-reduced curves}
   In this section we show that the curves studied by Martin-Deschamps and Perrin (see Theorem \ref{a84}) have 
   an interesting deformation theoretic property: there exists an infinitesimal deformation of such 
   a curve such that the associated reduced scheme does not deform  (see Corollary \ref{phe44} and Remark \ref{phe14}).
   To prove this we first give a general criterion under which a $d$-embeddable curve of the form $2C_1+C_2$, for
   $C_1, C_2$ reduced curves, satisfy this property (see Theorem \ref{tr1}). We prove in Corollary \ref{phe44}
   that the curves in Theorem \ref{a84} satisfy this criterion.
   
   \begin{se}
    Let $C_1, C_2$ be two reduced curves in $\p3$ without common components and $X$ a smooth degree $d$ surface in $\p3$ containing $C_1 \cup C_2$ for some $d \ge 2\deg(C_1)+\deg(C_2)+4$. 
    Denote by $C$ the effective divisor in $X$ of the form $2C_1+C_2$. Denote by $P_1$ (resp. $P, P_r$)
    the Hilbert polynomial of $C_1$ (resp. $C$, $C_{\red}$). We use notations as in Notation \ref{de-4}.
   \end{se}

  \begin{defi}
   A \emph{first order} $d$-\emph{embedded curve} is a $d$-embedded curve $(C,X)$ such that 
   $\Ima \beta_C \subset \Ima \rho_{_C}$, where $\beta_C$ and $\rho_{_C}$ are as in the diagram \eqref{phe11}. 
  \end{defi}
  
 Note that $(C,X)$ is first order $d$-embedded if for every first order deformation $C'$ of $C$ there exists a first order deformation $X'$ of $X$ such that $C' \subset X'$.
    Recall the following useful result:
 
  \begin{cor}[{\cite[Corollary $3.8$]{dan7}}]\label{dim4}
  The following holds true: The kernel of $\rho_{_C}$ is isomorphic to $H^0(\mo_X(-C)(d))$ and $\rho_{_C}$ is surjective if and only if $H^1(\mo_X(-C)(d))=0$.
   Moreover, if $H^1(\mo_X(-C)(d))=0$, then $\pr_1(T_{(C,X)}H_{P,Q_d})=H^0(\N_{C|\p3})$.
  \end{cor}
  
  \begin{lem}\label{phd03}
   If $C$ is $d+1$-regular (in the sense of Castelnuovo-Mumford regularity), then $\rho_{_C}$ is surjective. In this case, $(C,X)$ is first order $d$-embedded.
  \end{lem}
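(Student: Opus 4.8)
The plan is to deduce the surjectivity of $\rho_C$ from the Castelnuovo--Mumford regularity hypothesis via a cohomology-vanishing argument, and then to read off deformation $d$-regularity as a formal consequence. Recall from Lemma \ref{dim4} that $\rho_C\colon H^0(\N_{X|\p3}) \to H^0(\N_{X|\p3}\otimes\mo_C)$ is surjective if and only if $H^1(\mo_X(-C)(d))=0$; under the identification $\N_{X|\p3}=\mo_X(d)$, this is exactly the obstruction to lifting sections of $\mo_X(d)$ from $C$ to $X$. So the first and essential step is to show that $(d{+}1)$-regularity of $C$ in $\p3$ forces $H^1(\mo_X(-C)(d))=0$.

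To see this, I would argue on $\p3$ rather than on $X$. Twist the ideal-sheaf sequence of $C$ in $X$, namely $0 \to \mo_X(-C) \to \mo_X \to \mo_C \to 0$, by $\mo_X(d)$, and compare with the sequence $0 \to \mo_{\p3}(d-d') \xrightarrow{\cdot X} \mo_{\p3}(d) \to \mo_X(d) \to 0$ where here $d'=\deg X=d$; more efficiently, use the exact sequence $0 \to \I_{X}(d) \to \I_C(d) \to \I_{C/X}(d)\cong \mo_X(-C)(d) \to 0$ on $\p3$ (this is (\ref{ex0c}) twisted by $\op$). Taking cohomology, $H^1(\mo_X(-C)(d))$ is squeezed between $H^1(\I_C(d))$ and $H^2(\I_X(d))$. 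Now $H^2(\I_X(d))=H^2(\mo_{\p3}(d-d))=H^2(\mo_{\p3})=0$, and $H^1(\I_C(d))=0$ precisely because $C$ is $(d{+}1)$-regular: by definition of Castelnuovo--Mumford regularity, $(d{+}1)$-regularity of $C$ gives $H^i(\I_C(d{+}1-i))=0$ for all $i\ge 1$, and the regularity is preserved under increasing the twist, so in particular $H^1(\I_C(d))=0$. Hence $H^1(\mo_X(-C)(d))=0$, and Lemma \ref{dim4} yields surjectivity of $\rho_C$.

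For the final assertion, once $\rho_C$ is surjective we have $\Ima\rho_C = H^0(\N_{X|\p3}\otimes\mo_C)$, which certainly contains $\Ima\beta_C$; by the definition of deformation $d$-regularity ($\Ima\beta_C\subset\Ima\rho_C$), $C$ is deformation $d$-regular.

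The main obstacle is purely bookkeeping: getting the precise index shift in the regularity definition right (that $(d{+}1)$-regular means $H^i(\I_C(d{+}1-i))=0$ for $i>0$, together with the monotonicity statement that $m$-regular implies $m'$-regular for $m'\ge m$, so that the twist by $d$ rather than by $d{+}1-i$ is still covered) and making sure the surface has degree exactly $d$ so that $H^2(\I_X(d))=H^2(\mo_{\p3})$ vanishes. No hard geometry is involved; the content is entirely the translation through Lemma \ref{dim4}.
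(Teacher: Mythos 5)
Your proposal is correct and follows essentially the same route as the paper: the paper simply asserts that $(d{+}1)$-regularity gives $H^1(\mo_X(-C)(d))=0$ and then invokes Lemma \ref{dim4}, while you supply the (correct) intermediate step via the twisted sequence $0 \to \I_X(d) \to \I_C(d) \to \mo_X(-C)(d) \to 0$ together with $H^2(\I_X(d))=H^2(\mo_{\p3})=0$. The only superfluous remark is the appeal to monotonicity of regularity: the $i=1$ case of $(d{+}1)$-regularity already reads $H^1(\I_C(d))=0$ directly.
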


  \begin{proof}
   The definition of Castelnuovo-Mumford regularity implies
   that if $C$ is $d+1$-regular, then $H^1(\mo_X(-C)(d))=0$. By Corollary \ref{dim4}, this implies the
   surjectivity of $\rho_{_C}$.  Since $\rho_{_C}$ is surjective, $\Ima \beta_C \subset \Ima \rho_{_C}$. 
   Hence, $C$ is first order $d$-embedded. This proves the lemma.
  \end{proof}

  The following lemma states that if $(C,X)$ is a first order $d$-embedded curve and there exists a first order 
 deformation $C_{\xi}$ of $C$ containing  a first order deformation $C'_{\xi}$ of 
  $C_{\red}$, then $C'_{\xi}$ also contains a first order deformation of $C_1$. The proof of this statement uses the 
  $\mb{R}$-linearity of the Gauss-Manin connection.
  
  \begin{lem}
  If $(C,X)$ is first order $d$-embedded, then 
  \begin{equation}\label{pheeq3}
     \dim \pr_1T_{(C_{\red},C)}H_{P_r,P} \le \dim \pr_2T_{(C_1,C_{\red})}H_{P_1,P_r}
     \end{equation}
  where $\pr_1: T_{(C_{\red},C)} H_{P_r,P} \to T_{C_{\red}} H_{P_r}$ and $\pr_2:T_{(C_1,C_{\red})}H_{P_1,P_r} \to T_{C_{\red}} H_{P_r}$
  are natural projection maps.
  \end{lem}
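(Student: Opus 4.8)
The plan is to lift a first-order deformation of the flag $C_{\red}\subseteq C$ all the way up to a first-order deformation of the ambient surface $X$, push it through the Hodge-theoretic machinery recalled above, and then reassemble it on the deformed surface. Concretely, I would fix $\xi\in\pr_1 T_{(C_{\red},C)}H_{P_r,P}$ and, using the top Cartesian square of Remark \ref{phe11} (with $D=C_{\red}$), pick $\eta\in H^0(\N_{C|\p3})$ with $(\xi,\eta)\in T_{(C_{\red},C)}H_{P_r,P}$. The hypothesis that $C$ is deformation $d$-regular says $\Ima\beta_C\subseteq\Ima\rho_C$, which by the middle Cartesian square of Remark \ref{phe11} is equivalent to surjectivity of the projection $T_{(C,X)}H_{P,Q_d}\to H^0(\N_{C|\p3})$; so I can lift $\eta$ to some $\zeta\in H^0(\N_{X|\p3})$ with $(\eta,\zeta)\in T_{(C,X)}H_{P,Q_d}$. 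Since $C_{\red}\subseteq C\subseteq X$, composing the two flag deformations gives $(\xi,\zeta)\in T_{(C_{\red},X)}H_{P_r,Q_d}$: thus $\zeta$ is a first-order deformation of $X$ along which $C_{\red}$ deforms to $\xi$ and $C$ deforms to $\eta$.

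Next I would run $\zeta$ through the Hodge theory. The bound $d\ge 2\deg(C_1)+\deg(C_2)+4$ exceeds both $\deg(C_1)+4$ and $\deg(C_2)+4$, so by Lemma \ref{hf11} the reduced curves $C_1$ and $C_2$ are semi-regular. By the inclusion in the first part of Corollary \ref{hf12c} together with Lemma \ref{tan1}, the relations $(\xi,\zeta)\in T_{(C_{\red},X)}H_{P_r,Q_d}$ and $(\eta,\zeta)\in T_{(C,X)}H_{P,Q_d}$ force $\overline{\nabla}([C_{\red}])(\zeta)=0$ and $\overline{\nabla}([C])(\zeta)=0$ respectively. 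Since $C_1$ and $C_2$ have no common component, $C_{\red}=C_1+C_2$ as divisors on $X$, hence $[C_1]=[C]-[C_{\red}]$ and $[C_2]=[C_{\red}]-[C_1]$ in $H^{1,1}(X)$, and linearity of $\overline{\nabla}$ in the cohomology class gives $\overline{\nabla}([C_1])(\zeta)=0=\overline{\nabla}([C_2])(\zeta)$. Now I apply Corollary \ref{hf12c} in the reverse direction --- and this is where semi-regularity of $C_1$ and $C_2$ is essential --- to obtain first-order deformations $\chi$ of $C_1$ and $\chi_2$ of $C_2$, each contained in the first-order deformation $X_\zeta$ of $X$ determined by $\zeta$.

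Finally I would reassemble. The effective Cartier divisors $\chi$ and $\chi_2$ on $X_\zeta$ add up to a first-order deformation $\chi+\chi_2$ of $C_1+C_2=C_{\red}$ contained in $X_\zeta$. Since $\deg(C_{\red})+2\le d$, Lemma \ref{a4e} gives $H^0(\N_{C_{\red}|X})=0$, so --- exactly as in the proof of Lemma \ref{phe32} --- the map $\beta_{C_{\red}}$ is injective and $C_{\red}$ has at most one first-order deformation inside $X_\zeta$; hence $\chi+\chi_2=\xi$. In particular $\chi\subseteq\xi$, so the flag $C_1\subseteq C_{\red}$ deforms, inside this family, to $\chi\subseteq\xi$, which gives $(\chi,\xi)\in T_{(C_1,C_{\red})}H_{P_1,P_r}$ and therefore $\xi=\pr_2(\chi,\xi)\in\pr_2 T_{(C_1,C_{\red})}H_{P_1,P_r}$. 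As $\xi$ was an arbitrary element of $\pr_1 T_{(C_{\red},C)}H_{P_r,P}$, this gives the inclusion $\pr_1 T_{(C_{\red},C)}H_{P_r,P}\subseteq\pr_2 T_{(C_1,C_{\red})}H_{P_1,P_r}$ of linear subspaces of $H^0(\N_{C_{\red}|\p3})$, which in particular yields the asserted inequality (\ref{pheeq3}).

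The step I expect to be the genuine obstacle, and the one in which the hypotheses really enter, is the passage from ``the cohomology class $[C_1]$ remains Hodge along $\zeta$'' to ``the effective divisor $C_1$ actually deforms along $\zeta$''; this is precisely what semi-regularity of $C_1$ buys via Corollary \ref{hf12c}, and it is what makes a divisor of the special shape $2C_1+C_2$ manageable where a general effective divisor would not be. A more bookkeeping-type point requiring care is that $\chi+\chi_2$ and $\xi$ must be compared as deformations of $C_{\red}$ inside one and the same $X_\zeta$ in order to invoke injectivity of $\beta_{C_{\red}}$, and that the flag deformations of $C_{\red}\subseteq C$ and of $C\subseteq X$ do legitimately compose to a flag deformation of $C_{\red}\subseteq X$.
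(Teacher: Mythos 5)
Your proposal is correct and follows essentially the same route as the paper: lift the deformation of $C$ to a deformation $\zeta$ of $X$ using deformation $d$-regularity, deduce $\overline{\nabla}([C])(\zeta)=\overline{\nabla}([C_{\red}])(\zeta)=0$, use linearity of the Gauss--Manin connection and semi-regularity of $C_1$ to deform $C_1$ inside $X_\zeta$, and then match this with the given deformation of $C_{\red}$. The only (harmless) divergence is in the last step, where you also deform $C_2$ and identify $\chi+\chi_2$ with $\xi$ via the vanishing $H^0(\N_{C_{\red}|X})=0$ from Lemma \ref{a4e}, whereas the paper concludes directly from the injectivity of $\up{1}{C_1}{C_{\red}}$ supplied by Lemma \ref{phe13}.
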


  \begin{proof}
     Given $(\xi_1,\xi_2) \in T_{(C_{\red},C)}H_{P_r,P}$, we are going to show that there exist $\xi' \in H^0(\N_{C_1|\p3})$ such that $(\xi',\xi_1) \in T_{(C_1,C_{\red})}H_{P_1,P_r}$.
     Since $(C,X)$ is first order $d$-embedded, given a pair $(\xi_1, \xi_2) \in T_{(C_{\red},C)}H_{P_r,P}$, there exists $\xi \in H^0(\N_{X|\p3})$
     such that $\beta_C(\xi_2)=\rho_{_C}(\xi)$ and \[\up{5}{C_{\red}}{C}(\xi_2)=\up{6}{C_{\red}}{C}(\xi_1).\] Using Lemma \ref{nr13} we conclude,
     \[\rho_{C_{\red}}(\xi)=\up{2}{C_{\red}}{C} \circ \rho_{_C}(\xi)=\up{2}{C_{\red}}{C} \circ \beta_C(\xi_2)=\up{1}{C_{\red}}{C} \circ \up{5}{C_{\red}}{C}(\xi_2)=\]\[=\up{1}{C_{\red}}{C} \circ \up{6}{C_{\red}}{C}(\xi_1)=\beta_{C_{\red}}(\xi_1).\] 
     Corollary \ref{pr13} implies $\ov{\nabla}([2C_1+C_2])(\xi)=0$ and $\ov{\nabla}([C_1+C_2])(\xi)=0$. Since the differential $\ov{\nabla}$ is $\mb{R}$-linear, $\ov{\nabla}([C_1])(\xi)=0$
     and $\ov{\nabla}([C_2])(\xi)=0$.     
     But $\deg(C_1), \deg(C_2)$  and  $\deg(C_{\red})$ are less than $d-4$. 
     Hence by Corollary \ref{pr13}, there exists $\xi' \in H^0(\N_{C_1|\p3})$ such that $\up{1}{C_1}{C_{\red}} \circ \up{6}{C_1}{C_{\red}}(\xi')=\up{2}{C_1}{C_{\red}} \circ \rho_{C_{\red}}(\xi)$. So, 
     \[\up{1}{C_1}{C_{\red}} \circ \up{6}{C_1}{C_{\red}}(\xi')=\up{2}{C_1}{C_{\red}} \circ \rho_{C_{\red}}(\xi)=\up{2}{C_1}{C_{\red}} \circ \beta_{C_{\red}}(\xi_1)=\]\[=\up{1}{C_1}{C_{\red}} \circ \up{5}{C_1}{C_{\red}}(\xi_1).\]
     Using Lemma \ref{nr13}$(6)$, we can conclude that $\up{1}{C_1}{C_{\red}}$ is injective. So, \[\up{6}{C_1}{C_{\red}}(\xi')=\up{5}{C_1}{C_{\red}}(\xi_1).\] 
     Hence, $(\xi',\xi_1) \in T_{(C_1,C_{\red})}H_{P_1,P_r}$. 
     This completes the proof of the lemma.
  \end{proof}

  \begin{prop}\label{phe27}
   If $(C,X)$ is first order $d$-embedded, then the fiber over $C_{\red}$ to the morphism $(12 \to 1):H_{P_r,P} \to H_{P_r}$ is smooth at the point $(C_{\red},C)$.
   In particular, \[\dim (12 \to 1)^{-1}(C_{\red})=\dim \ker \up{5}{C_{\red}}{C}.\]
  \end{prop}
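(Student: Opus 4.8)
The plan is to prove that the scheme-theoretic fibre $F:=(12\to 1)^{-1}(C_{\red})$ has, at the point $p:=(C_{\red},C)$, Krull dimension equal to the dimension of its Zariski tangent space; over an algebraically closed field this is equivalent to $\mo_{F,p}$ being a regular local ring, i.e. to $F$ being smooth at $p$, and it simultaneously yields the displayed equality $\dim F=\dim\ker\up{5}{C_{\red}}{C}$. First I would compute $T_pF$. Specializing the Cartesian square of Theorem \ref{hf1} (equivalently the upper Cartesian square of Remark \ref{phe11} with $D=C_{\red}$), the space $T_pH_{P_r,P}$ is the fibre product of $\up{6}{C_{\red}}{C}\colon H^0(\N_{C_{\red}|\p3})\to H^0(\N_{C|\p3}\otimes\mo_{C_{\red}})$ and $\up{5}{C_{\red}}{C}\colon H^0(\N_{C|\p3})\to H^0(\N_{C|\p3}\otimes\mo_{C_{\red}})$, and the projection onto $T_{C_{\red}}H_{P_r}=H^0(\N_{C_{\red}|\p3})$ is $\pr_1$; its kernel, which is $T_pF$, is therefore exactly $\ker\up{5}{C_{\red}}{C}$. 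In particular $\dim_pF\le\dim\ker\up{5}{C_{\red}}{C}$ automatically, so it remains to exhibit inside $F$ an irreducible subvariety through $p$ of dimension $\ge\dim\ker\up{5}{C_{\red}}{C}$.

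For the lower bound I would use the ``move the ambient surface'' family. Let $\mb{P}(I_d(C_{\red}))^{\sm}\subseteq\mb{P}(I_d(C_{\red}))$ be the open locus of smooth surfaces, which is nonempty since $X$ lies in it. For such a $Y$ we have $C_1,C_2\subseteq C_{\red}\subseteq Y$ with $C_1,C_2$ Cartier on $Y$, so $2C_1+C_2$ is a well-defined effective divisor on $Y$ containing $C_{\red}$; by the adjunction formula on a degree $d$ surface its degree and arithmetic genus are independent of $Y$, hence equal to those of $C$, so $2C_1+C_2|_Y$ has Hilbert polynomial $P$. This produces a morphism $s\colon\mb{P}(I_d(C_{\red}))^{\sm}\to F$, $Y\mapsto(C_{\red},2C_1+C_2|_Y)$, with $s(X)=p$; put $G:=s\big(\mb{P}(I_d(C_{\red}))^{\sm}\big)$, an irreducible subvariety of $F$ through $p$. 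Note that $T_X\mb{P}(I_d(C_{\red}))^{\sm}$ is the subspace $V:=H^0(\mo_X(-C_{\red})(d))\subseteq H^0(\N_{X|\p3})=H^0(\mo_X(d))$ of deformations of $X$ preserving $C_{\red}$ (using $H^1(\I_X(d))=0$), and $ds_X$ carries an infinitesimal surface $X_\varepsilon\supseteq C_{\red}$ to the first-order deformation $2C_1+C_2|_{X_\varepsilon}$ of $C$; this makes sense because $C_{\red}\subseteq X_\varepsilon$ flatly forces $C_1,C_2\subseteq X_\varepsilon$ flatly, and, being Cartier on the closed fibre, $C_1$ and $C_2$ remain relative Cartier divisors over $k[\varepsilon]$.

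The crux is that $ds_X\colon V\to T_pF=\ker\up{5}{C_{\red}}{C}$ is surjective. Given $\xi\in\ker\up{5}{C_{\red}}{C}$, deformation $d$-regularity gives $\eta\in H^0(\N_{X|\p3})$ with $\rho_C(\eta)=\beta_C(\xi)$. By Lemma \ref{nr13}(2) and the commuting square of Remark \ref{phe11}, $\eta|_{C_{\red}}=\up{2}{C_{\red}}{C}(\rho_C(\eta))=\up{2}{C_{\red}}{C}(\beta_C(\xi))=\up{1}{C_{\red}}{C}\big(\up{5}{C_{\red}}{C}(\xi)\big)=0$, so $\eta\in H^0(\mo_X(-C_{\red})(d))=V$. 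Moreover, since $\beta_C$ is injective (Lemma \ref{phe32}), for this fixed $\eta$ there is a \emph{unique} first-order deformation of $C$ compatible with $\eta$ inside $H_{P,Q_d}$; both $\xi$ (by the choice of $\eta$) and $ds_X(\eta)=2C_1+C_2|_{X_\varepsilon}$ (being a divisor sitting inside $X_\varepsilon$) are such deformations, hence $ds_X(\eta)=\xi$. Thus $ds_X$ is onto $\ker\up{5}{C_{\red}}{C}$; this is the only place where the hypothesis is used.

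Finally I would assemble the dimensions. Surjectivity of $ds_X$ gives $T_pG=\Ima ds_X=T_pF$, whence $\dim T_pF=\dim V-\dim\ker ds_X$. On the other hand $\dim G=\dim V-\delta$, where $\delta$ is the dimension of a general fibre of $s$; as the general fibre has minimal dimension, $\delta\le\dim s^{-1}(s(X))\le\dim T_Xs^{-1}(s(X))\le\dim\ker ds_X$ (the tangent space to a fibre is contained in the kernel of the differential). Therefore $\dim G\ge\dim V-\dim\ker ds_X=\dim T_pF$, and combining with $\dim G\le\dim_pF\le\dim T_pF$ (the first inequality since $G\subseteq F$ is irreducible through $p$) forces $\dim_pF=\dim T_pF=\dim\ker\up{5}{C_{\red}}{C}$, so $\mo_{F,p}$ is regular and the proposition follows. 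The main obstacle is the surjectivity of $ds_X$, and within it the identification $ds_X(\eta)=\xi$: one has to check that the divisor cut out on the moving surface is precisely the deformation $\xi$ one started with, which rests on matching the two Cartesian squares of Remark \ref{phe11} with the injectivity of $\beta_C$; the remaining steps are bookkeeping with the fibre-dimension theorem.
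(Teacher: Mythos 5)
Your proposal is correct, and it proves the proposition by the same underlying mechanism as the paper, but with a noticeably different organization that is worth recording. Both arguments identify $T_{(C_{\red},C)}(12\to1)^{-1}(C_{\red})$ with $\ker\up{5}{C_{\red}}{C}$ via the Cartesian square, and both ultimately sandwich the local dimension of the fibre between this tangent space and the locus swept out by moving the ambient surface inside $\mb{P}(I_d(C_{\red}))$. The difference is in the packaging. The paper never writes down your morphism $s$; instead it (i) bounds $\dim\ker\up{5}{C_{\red}}{C}$ from above by $h^0(\I_{C_{\red}}(d))-h^0(\I_C(d))$ by the chain $\beta_C(\ker\up{5}{C_{\red}}{C})=\ker\up{2}{C_{\red}}{C}\cap\Ima\beta_C\subset\rho_C(\ker\rho_{C_{\red}})$ (this is exactly the content of your surjectivity of $ds_X$, since $\ker\rho_{C_{\red}}=V$ and $\ker\rho_C\subseteq\ker ds_X$), and (ii) bounds $\dim(12\to1)^{-1}(C_{\red})$ from below by $\dim I_d(C_{\red})-\dim I_d(C)$ by citing Corollary \ref{de-3}, whose proof (via Proposition \ref{de-2}, Lemma \ref{pa22} and the $d$-regularity statement of Theorem \ref{cas1}) is the global, whole-component version of your ``move the surface'' family. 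Your version is more self-contained and more geometric: the identity $ds_X(\eta)=\xi$, obtained by pinning down the unique element of $\beta_C^{-1}(\rho_C(\eta))$ using Lemma \ref{phe32}, makes explicit \emph{why} every tangent vector to the fibre is realized by an actual deformation, whereas the paper only ever compares dimensions; on the other hand the paper's route through Corollary \ref{de-3} is reused elsewhere (e.g.\ in Theorem \ref{tr1}), so it buys uniformity across the section. One small bookkeeping point in your write-up: the inequality $\dim G\le\dim_pF$ should be read with $G$ replaced by its closure (the scheme-theoretic image of $s$), which still passes through $p$ and is irreducible, so nothing is lost.
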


  \begin{proof}
  Lemma \ref{nr13}$(3)$ implies that $\up{6}{C_{\red}}{C}$ is injective. So, the tangent space to the fiber
   at $(C_{\red},C)$, $T_{(C_{\red},C)}((12 \to 1)^{-1}(C_{\red}))$, is isomorphic to the kernel of $\up{5}{C_{\red}}{C}$. Lemma \ref{nr13}$(5,6)$ imply that $\up{1}{C_{\red}}{C}$ and $\beta_C$ are injective.
   So, $\dim\ker \up{5}{C_{\red}}{C}$ equals $\dim \beta_C (\ker \up{5}{C_{\red}}{C})$ and \[ \ker \up{5}{C_{\red}}{C} = \ker \up{1}{C_{\red}}{C} \circ \up{5}{C_{\red}}{C} = \ker \up{2}{C_{\red}}{C} \circ \beta_C.\]
   Now, $\beta_C(\ker (\up{2}{C_{\red}}{C} \circ \beta_C))=\ker \up{2}{C_{\red}}{C} \cap \Ima \beta_C$.
  Since $\Ima \beta_C \subset \Ima \rho_{_C}$ ($(C, X)$ is first order $d$-embedded), we have \[\ker \up{2}{C_{\red}}{C} \cap \Ima \beta_C \subset \ker \up{2}{C_{\red}}{C} \cap \Ima \rho_{_C} = \rho_{_C}( \ker \up{2}{C_{\red}}{C} \circ \rho_{_C}).\]
  Since $\up{2}{C_{\red}}{C} \circ \rho_{_C}=\rho_{C_{\red}}$ (Lemma \ref{nr13}),  by Corollary \ref{dim4} we have \[\ker \rho_{_C} \cong H^0(\mo_X(-C)(d)) \, \mbox{ and } \, \ker \up{2}{C_{\red}}{C} \circ \rho_{_C} \cong H^0(\mo_X(-C_{\red}(d))).\]
  Therefore,
  \begin{equation}\label{pheeq2}
  \dim \ker \up{5}{C_{\red}}{C} \le \dim \ker (\up{2}{C_{\red}}{C} \circ \rho_{_C})-\dim \ker \rho_{_C}=h^0(\I_{C_{\red}}(d))-h^0(\I_C(d)).
  \end{equation}
      Conversely, Theorem \ref{phe42} implies that  \[\dim I_d(C_{\red})-\dim I_d(C) \le \dim (12 \to 1)^{-1}(C_{\red}) \le \]\[\le \dim T_{(C_{\red},C)} (12 \to 1)^{-1}(C_{\red}) = \dim \ker \up{5}{C_{\red}}{C}.\]
  Using the inequality (\ref{pheeq2}) we therefore conclude \[\dim (12 \to 1)^{-1}(C_{\red})=\dim T_{(C_{\red},C)} (12 \to 1)^{-1}(C_{\red})= \dim I_d(C_{\red})-\dim I_d(C).\]
  Hence, the fiber is smooth at the point $(C_{\red},C)$. This proves the proposition.
  \end{proof}
   
   \begin{thm}\label{tr1}
   Consider the following setup:
   \begin{enumerate}
    \item $(C,X)$ as before is first order $d$-embedded and there exists an irreducible component 
   $L$ of $H_P$ containing $C$ such that a general element $C_g$ of $L$ is $d$-embeddable
   and the Hilbert polynomial of $C_{g,\red}$ is $P_r$,
   \item let $L_r$ be the irreducible component 
    of $H_{P_r,P}$, as in  Proposition \ref{phe8}, containing the point corresponding to $(C_{\red},C)$ and mapping surjectively to $L$ via the morphism $(12 \to 2)$,
    \item there exists an irreducible component, say $L_0$ of $H_{P_1,P_r}$ such that $L_0$ is smooth at $(C_1,C_{\red})$
    and $\pr_2(L_0)_{\red}$ is contained in $(12 \to 1)(L_r)_{\red}$, where $\pr_2$ is the natural
    projection map from $H_{P_1,P_r}$ to $H_{P_r}$.
   \end{enumerate}
    Then $L_r$ is smooth at $(C_{\red},C)$ and $\dim \pr_2 L_0=\dim (12 \to 1) L_r$.
   \end{thm}

  \begin{proof}
   By Lemma \ref{nr13}$(3)$, we have $\up{6}{C_1}{C_{\red}}$ is injective.
       Since $L_0$ is smooth at $(C_1,C_{\red})$,
     \begin{equation}\label{pheeq4}
     \dim L_0=\dim T_{(C_1,C_{\red})}L_0=\dim \pr_2T_{(C_1,C_{\red})}L_0.
     \end{equation}
     
    By Proposition \ref{phe27}, we have $\dim \ker \up{5}{C_{\red}}{C}=\dim (12 \to 1)^{-1}(C_{\red})$. Hence,
     \begin{eqnarray}\label{pheeq5}
&\dim T_{(C_{\red},C)}L_r= \dim \pr_1T_{(C_{\red},C)}L_r+\dim \ker \up{5}{C_{\red}}{C}= \nonumber \\
 &=\dim \pr_1T_{(C_{\red},C)}L_r+\dim (12 \to 1)^{-1}(C_{\red}).
 \end{eqnarray}
 By Proposition \ref{phe8}, a general element $C'_g \in (12 \to 1)(L_r)$ is reduced, hence $d$-regular by Theorem \ref{cas1}. 
 This implies $\dim I_d(C_{\red})=\dim I_d(C'_g)$. Using  Theorem \ref{phe42}, we then conclude that 
     $\dim (12 \to 1)^{-1}(C_{\red})=\dim (12 \to 1)^{-1}(C'_g)$. 
     
 Now, $\dim L_0=\dim \pr_2L_0$ because 
     the fiber of $\pr_2$ is zero dimensional (there are only finitely many curves with Hilbert polynomial $P_1$ in $C_{\red}$). Finally, we have
     \[\dim \pr_1T_{(C_{\red},C)}L_r+\dim (12 \to 1)^{-1}(C'_g) \stackrel{(\ref{pheeq3})}{\le} \dim \pr_2T_{(C_1,C_{\red})}L_0+\dim (12 \to 1)^{-1}(C'_g)=\]
     \[\stackrel{(\ref{pheeq4})}{=}\dim L_0+\dim L_r-\dim (12 \to 1)(L_r).\]
     Using (\ref{pheeq5}) we have $\dim T_{(C_{\red},C)}L_r-\dim L_r \le \dim \pr_2L_0 - \dim (12 \to 1)(L_r)$.
 By the hypothesis, $\dim \pr_2L_0  \le \dim (12 \to 1)(L_r)$. Hence, $\dim T_{(C_{\red},C)}L_r \le \dim L_r$. Since the dimension of the tangent space of a scheme at a point is 
     at least equal to the dimension of the scheme at that point, we have \[\dim T_{(C_{\red},C)}L_r=\dim L_r \, \mbox{ and } \, \dim \pr_2L_0=\dim (12 \to 1)(L_r).\]
     This proves the theorem.    
          \end{proof}
          
    We now give an example of $C_1$ and $C$ satisfying the last hypothesis of Theorem \ref{tr1}. This will be used in the proof of Corollary \ref{phe44}.

          \begin{lem}\label{phd04}
          Let $l$ be a line and $C_2$ a smooth coplanar curve (on a plane containing $l$) such that $\#\{l \cap C_2\}<\infty$. Denote by $P_1$ (resp. $P_r$) the Hilbert polynomial of $l$ (resp. $l \cup C_2$).
           Then $H_{P_1,P_r}$ is smooth at $(l,l \cup C_2)$.
          \end{lem}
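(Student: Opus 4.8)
First I would recast the lemma as a tangent-space computation against an explicit family. Put $e:=1+\deg(C_2)$ and $Y:=l\cup C_2$; since $l$ and $C_2$ are coplanar, $Y$ is a reduced plane curve of degree $e$ lying in a unique plane $H\cong\mb{P}^2\subset\p3$. The structural observation driving everything is that $Y$ is a complete intersection of type $(1,e)$ in $\p3$: choosing coordinates with $H=\{x_3=0\}$, the homogeneous ideal of $Y$ in $\p3$ is $(x_3,F)$, where $F$ is the product of the pulled-back linear form defining $l$ in $H$ and the degree-$\deg(C_2)$ form defining $C_2$, and $(x_3,F)$ is a regular sequence because $F$ is independent of $x_3$. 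Hence $\N_{Y|\p3}\cong\mo_Y(1)\oplus\mo_Y(e)$, and therefore $\N_{Y|\p3}\otimes\mo_l\cong\mo_l(1)\oplus\mo_l(e)$.

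I would then compute $T_{(l,Y)}H_{P_1,P_r}$ via the Cartesian square of Theorem \ref{hf1}, which presents it as the fibre product of $H^0(\N_{Y|\p3})\to H^0(\N_{Y|\p3}\otimes\mo_l)\leftarrow H^0(\N_{l|\p3})$. Under the splitting above, the right-hand vertical (restriction) map is the pair of restrictions $H^0(\mo_Y(k))\to H^0(\mo_l(k))$ for $k=1,e$, each of which factors through $H^0(\mo_{\p3}(k))$ and is therefore surjective, since forms of every degree $k\ge0$ on $\p3$ restrict onto $H^0(\mo_l(k))$. Consequently the fibre product surjects onto $H^0(\N_{l|\p3})$ with fibre the kernel of that restriction, giving $\dim T_{(l,Y)}H_{P_1,P_r}=h^0(\N_{l|\p3})+h^0(\N_{Y|\p3})-h^0(\N_{Y|\p3}\otimes\mo_l)$. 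Now $\N_{l|\p3}\cong\mo_l(1)^{\oplus2}$ (the line is a complete intersection of two hyperplanes), so $h^0(\N_{l|\p3})=4$; $h^0(\N_{Y|\p3}\otimes\mo_l)=h^0(\mo_l(1))+h^0(\mo_l(e))=e+3$; and the exact sequences $0\to\mo_{\mb{P}^2}(k-e)\to\mo_{\mb{P}^2}(k)\to\mo_Y(k)\to0$ on $H\cong\mb{P}^2$ (valid since $\mo_Y(k)$ is the restriction of $\mo_{\mb{P}^2}(k)$, and $H^1$ of line bundles on $\mb{P}^2$ vanishes) give $h^0(\mo_Y(1))=3$ and $h^0(\mo_Y(e))=\binom{e+2}{2}-1$, whence $h^0(\N_{Y|\p3})=\binom{e+2}{2}+2$. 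Putting these together, $\dim T_{(l,Y)}H_{P_1,P_r}=4+\binom{e+1}{2}$.

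Finally I would produce an irreducible subvariety of $H_{P_1,P_r}$ of that same dimension through $(l,Y)$. Let $Z$ parametrize ``planar flags'': triples $(l',H',Y')$ with $l'\subset\p3$ a line, $H'\supset l'$ a plane, and $Y'$ a degree-$e$ effective divisor in $H'\cong\mb{P}^2$ containing $l'$. For fixed $(l',H')$ such divisors form the linear system $\mathbb{P}(\ell'\cdot H^0(\mo_{H'}(e-1)))$ of dimension $\binom{e+1}{2}-1$, so $Z$ is a projective bundle over the smooth $5$-dimensional flag manifold of incident pairs (line, plane) in $\p3$, hence irreducible of dimension $4+\binom{e+1}{2}$. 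The morphism $Z\to H_{P_1,P_r}$, $(l',H',Y')\mapsto(l',Y')$, is injective (the plane $H'$ is recovered as the linear span of the degree-$e$ plane curve $Y'$), so its image is an irreducible subvariety of $H_{P_1,P_r}$ of dimension $4+\binom{e+1}{2}$ passing through $(l,Y)$. Therefore $4+\binom{e+1}{2}\le\dim_{(l,Y)}H_{P_1,P_r}\le\dim T_{(l,Y)}H_{P_1,P_r}=4+\binom{e+1}{2}$, so equality holds throughout and the local ring of $H_{P_1,P_r}$ at $(l,Y)$ is regular; that is, $H_{P_1,P_r}$ is smooth at $(l,l\cup C_2)$.

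The argument is in essence a dimension count, and the step I expect to require the most care is precisely the matching of $\dim T_{(l,Y)}H_{P_1,P_r}$ with $\dim Z$: recognizing $Y=l\cup C_2$ as a $(1,e)$ complete intersection is what makes its normal sheaf split and its restriction to $l$ explicit, and after that the only genuine inputs are that forms of every degree on $\p3$ restrict onto the line $l$ and routine line-bundle cohomology on $\mb{P}^1$ and $\mb{P}^2$. I also use implicitly that $\deg(C_2)\ge1$ (so $e\ge2$) and that $l$ is not a component of $C_2$; both hold whenever the lemma is invoked.
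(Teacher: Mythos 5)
Your proof is correct and follows essentially the same route as the paper: both exploit that $l$ and $l\cup C_2$ are complete intersections to split the normal bundles, deduce surjectivity of the restriction map $H^0(\N_{Y|\p3})\to H^0(\N_{Y|\p3}\otimes\mo_l)$, compute $\dim T_{(l,Y)}H_{P_1,P_r}$ from the resulting fibre product, and match it against the dimension of the family of coplanar flags (line, plane, degree-$e$ divisor). The only cosmetic difference is that you realize the lower bound on the local dimension via an explicit irreducible subvariety $Z$ mapping injectively to $H_{P_1,P_r}$, whereas the paper computes $\dim H_{P_1,P_r}$ directly from the fibres of $\pr_1$; your version is if anything slightly more careful on that point.
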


          \begin{proof}
           Consider the projection map $\pr_1:H_{P_1,P_r} \to H_{P_1}$. 
           Note that the dimension of the fiber over a line $l_0$ to $\pr_1$ is equal to $\dim \mb{P}(H^0(\mo_{\mb{P}^2}(\deg(C_2))))+1$, where the first term is the dimension of the space of 
   degree $\deg(C_2)$ curves on a plane containing $l_0$ and the second term is the dimension of the space of planes in $\p3$ containing $l_0$. 
   Since $l_0 \in H_{P_1}$ is arbitrary and $\pr_1$ is surjective, $\dim H_{P_1,P_r}=\dim \mb{P}(\mo_{\mb{P}^2}(\deg(C_2)))+1+\dim H_{P_1}$.
   
   Since $l$ and $l \cup  C_2$ are complete intersection curves in $\p3$, \[\N_{l \cup C_2} \cong \mo_{l\cup C_2}(1) \oplus \mo_{l\cup C_2}(\deg(C_2)+1)\] and by \cite[Ex. III.$5.5$]{R1} the
   natural morphism from $H^0(\mo_{l\cup C_2}(k))$ to $H^0(\mo_l(k))$ is surjective for all $k \in \mb{Z}$. In particular, $\up{5}{l}{l \cup C_2}$ is surjective.
   Hence, the dimension of the tangent space $T_{(l,l\cup C_2)}H_{P_1,P_r}$ is equal to 
   \[h^0(\N_{l|\p3})+\dim \ker \up{5}{l}{l \cup C_2}=h^0(\N_{l|\p3})+(h^0(\mo_{l\cup C_2}(1))+  h^0(\mo_{l\cup C_2}(t)))-\]\[-(h^0(\mo_{l}(1))+ h^0(\mo_{l}(t))),
   \mbox{ where } t=\deg(C_2)+1.\]
   
   The homogeneous ideal of $l\cup C_2$ (resp. $l$) contains one (resp. two) linearly independent linear polynomials. So, $h^0(\mo_{l\cup C_2}(1))=h^0(\mo_{l}(1))+1$.
   Note then that the dimension of $\ker \up{5}{l}{l \cup C_2}$ is equal to $h^0(\mo_{l\cup C_2}(\deg(C_2)+1))-h^0(\mo_{l}(\deg(C_2)+1))+1$.
   Since $l \cup C_2$ and $l$ are $t$-regular (Theorem \ref{cas1}), one can use their Hilbert polynomials to prove that 
   \[\ker \up{5}{l}{l \cup C_2}=(\deg(C_2)+1)\deg(C_2)-\rho_a(l \cup  C_2)+1=\frac{\deg(C_2)(\deg(C_2)+3)}{2}+1\]
   which is equal to $\dim \mb{P}(\mo_{\mb{P}^2}(\deg(C_2)))+1$.
   Since $H_{P_1}$ is smooth and irreducible (Hilbert scheme parametrizing lines in $\p3$), we have
   \[\dim T_{(l, l \cup C_2)}H_{P_1,P_r}=h^0(\N_{l|\p3})+\frac{\deg(C_2)(\deg(C_2)+3)}{2}+1=\]\[=\dim H_{P_1}+\dim \mb{P}(\mo_{\mb{P}^2}(\deg(C_2)))+1=\dim H_{P_1,P_r}.\]
   This proves the lemma.
          \end{proof}

 We now recall the classical example of Martin-Deschamps and Perrin.
  
\begin{note}\label{a7}
 Let $a, d$ be positive integers, $d \ge 5$ and $a>0$. Let $X$ be a smooth projective surface of degree $d$ containing a line $l$ and a smooth coplanar curve $C_2$ of degree $a$.
 Let $C$ be a divisor of the form $2l+C_2$ in $X$. Denote by $P$ (resp. $P_r$) the Hilbert polynomial of $C$ (resp. $C_{\red}$).
\end{note}

\begin{thm}[Martin-Deschamps and Perrin \cite{mar1}]\label{a84}
There exists an irreducible component, say $L$ of $H_P$ such that a general curve $D \in L$ is a divisor in a smooth degree $d$ surface, of the form $2l'+C_2'$ where $l', C_2'$ are coplanar curves with $\deg(l')=1$
and $\deg(C_2')=a$. Furthermore, $L$ is generically non-reduced.
\end{thm}

\begin{proof}
 The theorem follows from \cite[Proposition $0.6$, Theorems $2.4, 3.1$]{mar1}.
\end{proof}

\begin{defi}
 Given a scheme $X$ and a point $x \in X$, we say that $x$ is \emph{weakly general} if $x$ is contained in an 
 unique irreducible component of $X$.
\end{defi}

In the following corollary we see that the examples in Theorem \ref{a84} satisfy the hypotheses in Theorem \ref{tr1}.
          \begin{cor}\label{phe44}
          Notations as in \ref{a7} and Theorem \ref{a84}. Let $L_r$ be the irreducible  component
           of $H_{P_r,P}$, as in Proposition \ref{phe8}, mapping surjectively to $L$.  
           If $(C_{\red},C)$ correspond to a weakly general point on $H_{P_r,P}$, then $L_r$ is smooth at this point.       
       In particular, there exists $\xi_0 \in H^0(\N_{C|\p3})$ such that  $\up{5}{C_{\red}}{C}(\xi_0) \not\in \Ima \up{6}{C_{\red}}{C}$.
          \end{cor}

  \begin{proof}
  Using Theorem \ref{tr1} it suffices to show that $(C,X)$ is first order $d$-embedded, there exists an irreducible component, $L_0$ of $H_{P_1,P_r}$ such that
   $\pr_2(L_0)_{\red} \subset (12 \to 1)(L_r)_{\red}$ and $L_0$ is smooth at $(l,C_{\red})$. 
   
  Now, \cite[Theorem $4.12$]{dan7} implies that the Castelnuovo-Mumford regularity of $C$ is at most
  $d+1$, hence first order $d$-embedded by Lemma \ref{phd03}.  By the definition of $L$, $(12 \to 1)(L_r)_{\red}$
   contains all coplanar curves which are the union of a line and a degree $\deg(C_2)$ coplanar curve. Note that there exists an irreducible sub-variety in $H_{P_r}$ which 
   parametrizes all such curves and there exists
   an irreducible component, say $L_0$, in $H_{P_1,P_r}$ which maps surjectively to this sub-variety.
   Finally, Lemma \ref{phd04} implies $L_0$ is smooth at $(l,C_{\red})$. This proves the first part of the corollary.
   
   Recall,  $L$ is non-reduced at the point corresponding to $C$. Since $L_r$ is reduced, $(12 \to 2)(L_r)$ is reduced 
   at the point corresponding to $C$ (scheme-theoretic image of a reduced scheme is reduced). 
   This means that there exists $\xi_0 \in H^0(\N_{C|\p3})$ not contained in the image
   of the projection map from $T_{(C_{\red},C)}H_{P_r,P}$ to $T_CH_P$. In particular,
   $\up{5}{C_{\red}}{C}(\xi_0) \not\in \Ima \up{6}{C_{\red}}{C}$. This proves the rest of the corollary.
  \end{proof}
  
  \begin{rem}\label{phe14}
   Note that $\up{5}{C_{\red}}{C}(\xi_0) \not\in \Ima \up{6}{C_{\red}}{C}$, in Corollary \ref{phe44}, means that \emph{there exists
   a first order deformation of $C$ such that $C_{\red}$ does not deform}.  
  \end{rem}

\section{Extension of curves and induced non-reducedness}
 In the previous section we gave examples of $d$-embeddable curves under which there
 exists an infinitesimal deformation of the curve such that the associated reduced scheme does not deform.
 In this section we introduce ``extension of curves''. We observe that if $D$ and $C$ are $d$-embeddable curves
 with $D$ an extension of  $C$ and there exists an infinitesimal deformation 
 of $C$ such that $C_{\red}$ does not deform, then there exists an infinitesimal deformation of $D$ such that $D_{\red}$ does not deform (see Proposition \ref{phd06}).
 We use this to see under certain conditions, the Hilbert scheme containing $D$ is non-reduced (see Theorem \ref{phd05}).
 
 \begin{defi}\label{pr5}
 Let $(C,X)$ and $(D,X)$ be two $d$-embedded curves. We say that 
 $(D,X)$ is a \emph{simple extension of  } $(C,X)$ if it satisfies the following conditions:
 \begin{enumerate}
  \item $D^c:=D-C$ is of the form $nC'$ for some positive integer $n$, $C'\subset X$ reduced curve and $C' \cap C_{\red}$ consists of finitely many points,
  \item The image of $\up{6}{C}{D}$ is contained in the image of $\up{5}{C}{D}$.
 \end{enumerate}
 We say that $(D,X)$ is an \emph{extension of  } $(C,X)$ if there exists a sequence \[C=C_0 \le C_1 \le ... \le C_n=D\] of effective divisors on $X$ such that
 $(C_{i+1},X)$ is a simple extension of  $(C_i,X)$ for $i \ge 0$.
\end{defi}

 \begin{prop}\label{phd06}
Let $(C,X)$ be a $d$-embedded curve for some $d \ge \deg(C)+4$. 
Suppose $\xi_0 \in H^0(\N_{C|\p3})$ be such that $\up{5}{C_{\red}}{C}(\xi_0) \not\in \Ima \up{6}{C_{\red}}{C}$.
Let $(D,X)$ be an extension of $(C,X)$ and $\deg(D) \le d-4$.
If $(D,X)$ is first order $d$-embedded, then there exists $\xi \in H^0(\N_{D|\p3})$ such that $\up{5}{D_{\red}}{D}(\xi) \not\in \Ima \up{6}{D_{\red}}{D}$.
\end{prop}

 \begin{proof}
  Suppose that there exists a chain $C=C_0 \le C_1 \le ... \le C_n=D$ of effective divisors on $X$ such that 
  $(C_{i+1},X)$ is a simple extension of  $(C_i,X)$ for $i=0,...,n-1$. We first show that if $(D,X)$ is first order $d$-embedded, then 
 each $(C_i,X)$ for $i=0,...,n$ is first order $d$-embedded. Suppose not i.e., there exists some $i \in \{0,...,n\}$ and $\xi'_i \in H^0(\N_{C_{i}|\p3})$ 
 such that $\beta_{C_i}(\xi'_i) \not\in \Ima \rho_{_{C_i}}$. 
 Since $(C_{i+1},X)$ is a simple extension of  $(C_{i},X)$, 
  there exists $\xi'_{i+1} \in H^0(\N_{C_{i+1|\p3}})$ such that $\up{5}{C_i}{C_{i+1}}(\xi'_{i+1})=\up{6}{C_i}{C_{i+1}}(\xi'_i)$. Now, 
  \[\up{2}{C_i}{C_{i+1}} \circ \beta_{C_{i+1}}(\xi'_{i+1})=\up{1}{C_i}{C_{i+1}} \circ \up{5}{C_i}{C_{i+1}}(\xi'_{i+1})=\up{1}{C_i}{C_{i+1}} \circ \up{6}{C_i}{C_{i+1}}(\xi'_i)\]
  which by Lemma \ref{nr13} is equal to   $\beta_{C_i}(\xi'_i)$.
  Since $\rho_{_{C_i}}=\up{2}{C_i}{C_{i+1}} \circ \rho_{_{C_{i+1}}} $, if $\beta_{C_{i+1}}(\xi'_{i+1}) \in \Ima \rho_{_{C_{i+1}}}$, then 
  \[\beta_{C_i}(\xi'_i)=\up{2}{C_i}{C_{i+1}} \circ \beta_{C_{i+1}}(\xi'_{i+1}) \in \up{2}{C_i}{C_{i+1}}(\Ima \rho_{_{C_{i+1}}})=\Ima \rho_{_{C_i}}\] where the last equality follows from 
  Lemma \ref{nr13}. But this contradicts the assumption on $\xi'_i$.
  Hence, $\beta_{C_{i+1}}(\xi'_{i+1}) \not\in \Ima \rho_{_{C_{i+1}}}$.  Proceeding recursively, we get a contradiction to $\Ima \beta_D \subset \Ima \rho_{_D}$. 
  Therefore, for each $i=0,...,n$, $(C_i,X)$ is first order $d$-embedded.
 
 Since $(C_{i},X)$ is a simple extension of  $(C_{i-1},X)$,  there exists $\xi_i \in H^0(\N_{C_i|\p3})$ such that 
 \[\up{5}{C_{i-1}}{C_{i}}(\xi_i)=\up{6}{C_{i-1}}{C_{i}}(\xi_{i-1})\, \, \mbox{ for all } \, i \in \{1,...,n\}.\] 
 Using the diagram \ref{phe11} we observe that {\small \[\rho_{C_i}^{-1}(\beta_{C_i}(\xi_i)) \subset (\up{2}{C_{i-1}}{C_i} \circ \rho_{C_i})^{-1} \circ (\up{2}{C_{i-1}}{C_i} \circ \beta_{C_i}(\xi_i))=(\up{2}{C_{i-1}}{C_i} \circ \rho_{C_i})^{-1} \circ (\up{1}{C_{i-1}}{C_i} \circ \up{5}{C_{i-1}}{C_i}(\xi_i)) \subset\]
 \[\subset (\up{2}{C_{i-1}}{C_i} \circ \rho_{C_i})^{-1}(\up{1}{C_{i-1}}{C_i} \circ \up{6}{C_{i-1}}{C_i}(\xi_{i-1}))=\rho_{C_{i-1}}^{-1}(\beta_{C_{i-1}}(\xi_{i-1}))\]}
 where the last equality follows from Lemma \ref{nr13}. Hence by Corollary \ref{pr13}, 
 \[\ov{\nabla}([C_i])(t)=0=\ov{\nabla}([C_{i-1}])(t)\, \, \, \mbox{ for all } t \in \rho_{C_i}^{-1}(\beta_{C_i}(\xi_i)).\]
 In other words, $\ov{\nabla}([C_i])(t)=0$ for all $t \in \rho_{_D}^{-1}(\beta_D(\xi_n)) \subset \rho_{_C}^{-1}(\beta_C(\xi_0))$ and $i=0,...,n$.
 
 Let $C_{i}=C_{i-1}+a_{i-1}C'_{i-1}$ for some $a_{i-1}$ positive integer and $C'_{i-1}$ reduced curve for $i=1,...,n$.
 Then we conclude  
 \[0=\ov{\nabla}([C_i]-[C_{i-1}])(t)=a_{i-1}\ov{\nabla}([C'_{i-1}])(t) \, \mbox{ for all } t \in \rho_{_D}^{-1}(\beta_D(\xi_n)).\]
 Since $C'_{i-1} \cap C_{i-1,\red}$ consists of finitely many points, $C_{i,\red}-C_{i-1,\red}=C'_{i-1}$.
 Hence, 
 \begin{equation}\label{de-20}
  0=\ov{\nabla}([C'_{i-1}])(t) = \overline{\nabla}([C_{i,\red}]-[C_{i-1,\red}])(t) \, \mbox{ for all } t \in \rho_{_D}^{-1}(\beta_D(\xi_n)).
 \end{equation}
  Suppose now that $\up{5}{D_{\red}}{D}(\xi_n) \in \Ima \up{6}{D_{\red}}{D}$. Then 
 {\small \[\rho_{D}^{-1}(\beta_{D}(\xi_n)) \subset (\up{2}{D_{\red}}{D} \circ \rho_{D})^{-1} \circ (\up{2}{D_{\red}}{C_i} \circ \beta_{D}(\xi_n))=\]
 \[=(\up{2}{D_{\red}}{D} \circ \rho_{D})^{-1} \circ (\up{1}{D_{\red}}{D} \circ \up{5}{D_{\red}}{D}(\xi_n)) \subset (\up{2}{D_{\red}}{D} \circ \rho_{D})^{-1}(\Ima (\up{1}{D_{\red}}{D} \circ \up{6}{D_{\red}}{D})).\]}
 Then  Corollary \ref{pr13} implies $\overline{\nabla}([D_{\red}])(t)=0$ for all $t \in {\rho_{D}}^{-1}(\beta_{D}(\xi_n))$. 
 Substituting this in \eqref{de-20}, we have $\ov{\nabla}[C_{i,\red}](t)=0$ for all $t \in \rho_{_D}^{-1}(\beta_D(\xi_n))$ and $i=0,...,n$.
 Corollary \ref{pr13} implies that for a given $t \in \rho_{_D}^{-1}(\beta_D(\xi_n))$, there exists $\xi'_{0}(t) \in H^0(\N_{C_{\red}|\p3})$ such that 
 $\rho_{_{C_{\red}}}(t)=\beta_{C_{\red}}(\xi'_{0}(t))$. Since $\rho_{_D}^{-1}(\beta_D(\xi_n)) \subset \rho_{_C}^{-1}(\beta_C(\xi_0))$, Lemma \ref{nr13} implies that
 \[\up{1}{C_{\red}}{C} \circ \up{6}{C_{\red}}{C}(\xi'_{0}(t))=\up{2}{C_{\red}}{C} \circ \rho_{C}(t)=\up{2}{C_{\red}}{C} \circ \beta_{C}(\xi_{0})=\up{1}{C_{\red}}{C} \circ \up{5}{C_{\red}}{C}(\xi_{0}).\]
  By Lemma \ref{nr13}$(6)$, $\up{1}{C_{\red}}{C}$ is injective. Hence, $\up{6}{C_{\red}}{C}(\xi'_{0}(t))=\up{5}{C_{\red}}{C}(\xi_{0})$, contradicting 
  the assumption on $\xi_0$. Hence, $\up{5}{D_{\red}}{D}(\xi_n) \not\in \Ima \up{6}{D_{\red}}{D}$.
 This proves the proposition.
\end{proof}

The following theorem states that such an infinitesimal deformation $\xi$ gives rise to a singularity on the corresponding Hilbert scheme.

\begin{thm}\label{phd05}
 Let $(C,X)$ be as in Proposition \ref{phd06} and $(D,X)$ an extension of $(C,X)$. Denote by $P_{D_r}$ (resp. $P_D$) the Hilbert polynomial of $D_{\red}$ (resp. $D$).
 Suppose further that $(D,X)$ satisfies the following conditions:
 \begin{enumerate}
  \item $\deg(D) \le d-4$,
  \item $(D,X)$ is first order $d$-embedded,
  \item the point $o_D \in H_{P_D}$ corresponding to $D$ is weakly general, and for the unique irreducible component 
  $L$ of $H_{P_D}$ containing $o_D$, the Hilbert polynomial of $D_{g,\red}$ for a general curve $D_g \in L$, is equal to $P_{D_r}$.
   \end{enumerate}
Then $L$ is singular at $o_D$. Furthermore, if a general element of $L$ is $d$-embeddable, then $L$ is non-reduced at $o_D$.
\end{thm}

\begin{proof}
 Replace in Notation \ref{de-4}, $P_r$ by $P_{D_r}$, $P$ by $P_D$.
 By Proposition \ref{phe8}, there exists an irreducible component $L_r$ of $H_{P_{D_{r}},P_D}$ 
 such that $(12 \to 2):L_{r,\red} \to L_{\red}$ is surjective and $(12 \to 2)^{-1}(o_D)$ are points corresponding to pairs $(D',D)$ 
 with $D_{\red}=D'_{\red} \subset D'$.
 Since $D'$ has the same Hilbert polynomial as $D_{\red}$, we conclude $D'=D_{\red}$.
 In particular, $(12 \to 2)^{-1}(o_D)$ is the point corresponding to the pair $(D_{\red},D)$.
 
 Proposition \ref{phd06} implies $h^0(\N_{D|\p3})>\dim \pr_2T_{(D_{\red},D)}L_r$. By the injectivity of $\up{6}{D_{\red}}{D}$ (Lemma \ref{nr13}$(3)$),
 $\dim \pr_2T_{(D_{\red},D)}L_r=\dim T_{(D_{\red},D)}L_r$. Since the fiber to $(12 \to 2)|_{L_r}$ is zero dimensional
    over every point (finitely many sub-curves of a fixed Hilbert polynomial in a fixed curve), we have $\dim L_r=\dim L$.
 Using the diagram \eqref{phe11} and the fact that $C$ is weakly general, we then have
     \[\dim T_{o_D}L=h^0(\N_{D|\p3})>\dim \pr_2T_{(D_{\red},D)}L_r=\dim T_{(D_{\red},D)}L_r \ge \dim L_r=\dim L.\]
      This proves the first part of the theorem.
 
 Suppose now that a general element of $L$ is $d$-embeddable. This means that there exists an open subset $U \subset L$
 such that for all $u \in U$, the corresponding curve $\mc{D}_u$ is $d$-embeddable. Assume that $U$ is reduced. Denote by $U':=(12 \to 2)|_{L_r}^{-1}(U)$.
 Then  Lemma \ref{de-13} implies that the for all $u \in U$, $(12 \to 2)|_{L_r}^{-1}$ is a reduced point, corresponding to the pair $(\mc{D}_{u,\red}, \mc{D}_u)$.
In particular, every fiber over $U$ has the same Hilbert polynomial.
Hence the morphism $(12 \to 2)|_{U'}$ is flat (\cite[Theorem III.$9.9$]{R1}) and proper (base change of proper morphisms is proper). 
Since every fiber of $(12 \to 2)|_{U'}$ is smooth of relative dimension zero, \cite[Ex. III. 10.3]{R1} implies that the morphism $(12 \to 2)|_{U'}$ is \'{e}tale.
     Now, \'{e}tale morphisms induce surjection of tangent spaces i.e., $\pr_2:T_{(D_{\red},D)}L_r \to H^0(\N_{D|\p3})$ is surjective.
     But this contradicts the observation $h^0(\N_{D|\p3})>\dim \pr_2T_{(D_{\red},D)}L_r$ before.
     So, $U$ cannot be reduced. This proves the remaining theorem.
\end{proof}

\begin{exa}\label{de-23}
Let $L$ be as in Theorem \ref{a84} and $(C,X)$ a $d$-embedded curve corresponding to a weakly general point on $L$. By
Corollary \ref{phe44}, there exists $\xi_0 \in H^0(\N_{C|\p3})$  such that $\up{5}{C_{\red}}{C}(\xi_0) \not\in \Ima \up{6}{C_{\red}}{C}$.
 Let $(D,X)$ be an extension of  $(C,X)$ and satisfying the conditions of Theorem \ref{phd05}. 
 Denote by $P_D$ the Hilbert polynomial of $D$.
 Then  by Theorem \ref{phd05}, the unique irreducible component of $H_{P_D}$ containing the point $o_D$ corresponding to $D$, is singular at $o_D$.
\end{exa}

For the sake of completeness  we consider the case when a general element of $L$ is not first order $d$-embedded. We see that in this case as well we get non-reducedness of $L$.
\begin{thm}\label{phd20}
Let $D$ be a $d$-embeddable curve for some $d \ge \deg(D)+4$ and $P_D$ be the Hilbert polynomial of $D$.
Let $L$ be an irreducible component of $H_{P_D}$ such that a general point on $L$ correspond to 
a $d$-embeddable but not first order $d$-embedded curve. Then $L$ is generically non-reduced.
 \end{thm}

 \begin{proof}
Let $D_g$ be a $d$-embedded curve corresponding to a general point on $L$.
There exists an irreducible component $L_s$ of $H_{P_D,Q_d}$ mapping surjectively to $L$ and the fiber 
over the point corresponding to $D_g$ is isomorphic to $\mb{P}(I_d(D_g))$.
Denote by $\pr_1$ the natural morphism from $L_s$ to $L$. 
 By the upper-semicontinuity of fiber dimension, there exists a non-empty open subset $U \subset L$ 
 such that for all $u \in U$ and the corresponding curve $\mc{D}_u$, we have  
 \[\dim \pr_1^{-1}(\mc{D}_u)=\dim \mb{P}(I_d(\mc{D}_u))=\dim \mb{P}(I_d(D_g)).\]
In other words, for all $u \in U$, the fiber $\pr_1^{-1}(u)$ is a projective space of a fixed dimension.
This implies that every such fiber has the same Hilbert polynomial. 

Suppose now that $U$ is reduced. Denote by $U':=\pr_1^{-1}(U)$. Then \cite[Theorem III.$9.9$]{R1} implies that $\pr_1|_{U'}$ 
 is flat with smooth fibers.
By \cite[Theorem III.$10.2$]{R1}, we then conclude that $\pr_1|_{U'}$ is a smooth morphism.
 But a smooth morphism $f:V \to W$ satisfies the condition: the induced differential map is surjective on tangent spaces i.e.,
 $df_v(T_vV)=T_{f(v)}W$ for all $v \in V$. Substituting $f$ by $\pr_1|_{U'}$ we get a contradiction to the assumption that $D_g$ is not first order $d$-embedded
 (by definition, $(D_g,X_g)$ is first order $d$-embedded if and only if $\pr_1:T_{(D_g,X_g)}L_s \to H^0(\N_{D|\p3})$ is surjective).
 Hence, $U$ must be non-reduced.
 Furthermore, replacing $U$ by any open subset of $U$, the same arguments imply that there does not exist 
 any reduced open subset of $U$. Therefore, $L$ is generically non-reduced. This proves the theorem.
 \end{proof}


 


 \section{Applications}
 
   In this section, we give a criterion to extend $d$-embeddable curves (see Theorem \ref{de-15}). We use this to produce examples of 
   extension of curves (see Example \ref{de-21}). Finally, we relate the problem of extending curves to finding families of curves 
   passing through a set of points and avoiding another prescribed set of points (see Theorem \ref{de-18}).
   
   \begin{defi}
    Let $(E,X)$ be a $d$-embedded curve and $S \subset E$ a finite set of reduced, closed points. For any $p \in S$, denote by $S_p:=S\backslash \{p\}$.
    Consider the natural morphism:
    {\small \[r^c_{p}:H^0(\N_{E|\p3}) \to H^0(\N_{E|\p3} \otimes \mo_{S_p}) \, \mbox{ and } r_p:H^0(\N_{E|\p3}) \to H^0(\N_{E|\p3} \otimes \mo_p) \to H^0(\N_{X|\p3} \otimes \mo_p)\]}
    where the last morphism $r_p$ arises after pulling back to $\mo_p$ the normal short exact \eqref{sh2}.
    
    We say that $(E,X)$ is \emph{first order} $S$-\emph{free} if for each point $p \in S$, there exists an element $\xi_p \in H^0(\N_{E|\p3})$
    such that $r^c_{p}(\xi_p)=0$ and $r_p(\xi_p) \not=0$.
   \end{defi}

   \begin{rem}
 Note that $(E,X)$ is first order $S$-free implies that for every $p \in S$, there exists a first order 
 deformation $E'_p$ of $E$ (corresponding to $\xi_p$) such that $S_p \times \Spec(\mb{C}[t]/(t^2)) \subset E'_p$ 
 but $p \times \Spec(\mb{C}[t]/(t^2)) \not\subset E'_p$.
 This follows directly from the relation between the ideal of $E'_p$ in $\mb{P}^3 \times \Spec(\mb{C}[t]/(t^2))$ 
 and the corresponding element $\xi_p \in H^0(\N_{E|\p3})$ as described in \cite[Theorem $2.4$]{R3}.
   \end{rem}

 \begin{thm}\label{de-15}
  Let $(C,X)$ be a $d$-embedded curve and $D^c$ an effective divisor of $X$. Suppose $D^c$ is an integral multiple of a reduced divisor of $X$.
  Denote by $r_{_{C}}$ and $r_{_{D^c}}$ the restriction morphisms,
  \[r_{_{C}}:H^0(\N_{C|\p3}) \to H^0(\N_{C|\p3} \otimes \mo_{C.D^c}) \to H^0(\N_{X|\p3} \otimes \mo_{C.D^c}) \, \mbox{ and }\]
   \[r_{_{D^c}}:H^0(\N_{D^c|\p3}) \to H^0(\N_{D^c|\p3} \otimes \mo_{C.D^c}) \to H^0(\N_{X|\p3} \otimes \mo_{C.D^c}),\]
where the first morphism for both $r_{_{C}}$ and $r_{_{D^c}}$ are canonical restrictions and the second morphism arises as the restriction to $\mo_{C.D^c}$
of the normal exact sequence \eqref{sh2}. If $\Ima(r_{_{C}}) \subset \Ima(r_{_{D^c}})$, then $(C \cup D^c,X)$ is a simple extension of $(C,X)$.
 \end{thm}
  
 \begin{proof}
  Denote by $D:=C \cup D^c$. We simply need to prove that if $\Ima(r_{_{C}}) \subset \Ima(r_{_{D^c}})$, then $\Ima \up{6}{C}{D} \subset \Ima \up{5}{C}{D}$.
   Consider the short exact sequence:
  \begin{equation}\label{de-16}
   0 \to \mo_D \xrightarrow{\rho_{_1}} \mo_C \oplus \mo_{D^c} \xrightarrow{\rho_{_2}} \mo_{C.D^c} \to 0,
  \end{equation}
  where, over any open subset $U \subset D$,
  \[\rho_{_1}|_{_U}(f)=(f \mod \I_C(U),f \mod \I_{D^c}(U)) \, \mbox{ and }\, \rho_{_2}|_{_U}(f,g)=(\ov{f}-\ov{g}).\]
  Consider the natural composed morphisms:
  \[\beta':H^0(\N_{C|\p3}) \oplus H^0(\N_{D^c|\p3}) \to H^0(N_{X|\p3} \otimes \mo_C) \oplus H^0(\N_{X|\p3} \otimes \mo_{D^c}) \to H^0(\N_{X|\p3} \otimes \mo_{C.D^c}),\]
  where the first morphism arises from the normal short exact sequence \eqref{sh2} and the second morphism is induced by $\rho_{_2}$.
  Tensoring \eqref{de-16} by $\N_{D|\p3}$ and taking global sections, we get the following diagram:
  \begin{equation}\label{de-17}
   \begin{diagram}
    & &H^0(\N_{C|\p3}) \oplus H^0(\N_{D^c|\p3})&\rTo^{\beta'}&H^0(\N_{X|\p3} \otimes \mo_{C.D^c})\\
    & &\dTo^{\up{6}{C}{D} \oplus \up{6}{D^c}{D}}& &\uTo^r\\
    H^0(\N_{D|\p3})&\rInto^{\up{5}{C}{D} \oplus \up{5}{D^c}{D}}&H^0(\N_{D|\p3} \otimes \mo_C) \oplus H^0(\N_{D|\p3} \otimes \mo_{D^c})&\rTo^{\rho_{_2}'}&H^0(\N_{D|\p3} \otimes \mo_{C.D^c})
   \end{diagram}
  \end{equation}
 where $r$ arises from pulling back \eqref{sh2} to $\mo_{C.D^c}$. We claim that the right hand square is commutative and 
 the restriction of $r$ to $\rho_{_2}'(\Ima (\up{6}{C}{D} \oplus \up{6}{D^c}{D}))$ is injective.
 
 Let $\xi_0 \in H^0(\N_{C|\p3})$ and $\xi_1 \in H^0(\N_{D^c|\p3})$. Then  $\xi_0$ and $\xi_1$ correspond to $\mo_{\p3}$-linear homomorphisms:
 \[\xi_0:\I_C \to \mo_C \, \mbox{ and }\, \xi_1:\I_{D^c} \to \mo_{D^c}\, \mbox{ respectively.}\]
 Note that $C$ and $D^c$ are local complete intersection in $X$. Hence, for any $x \in C.D^c$, there exists a small enough open 
 neighbourhood $U_x$ of $x$ in $\p3$ and $f_x, g_x \in \mo_{\p3}(U_x)$ such that $\I_{C,x}$ (resp. $\I_{D^c,x}$) is generated by 
 $f_x$ and $F_x$ (resp. $g_x$ and $F_x$), where $F_x$ is a regular function defining $X \cap U_x$ in $U_x$. Note that $\I_{D,x}$ (resp. $\I_{C.D,x}$)
 is generated by $F_x$ and $f_xg_x$ (resp. $F_x$, $f_x$ and $g_x$), as an $\mo_{\p3,x}$-module. Then 
 {\small \[\up{6}{C}{D}(\xi_0)_x:\I_{D,x} \hookrightarrow \I_{C,x} \xrightarrow{\xi_{0,x}} \mo_{C,x} \, \mbox{ sends }\, F_x \mbox{ to } \xi_{0,x}(F_x) \mbox{ and } f_xg_x \mbox{ to } \xi_{0,x}(f_xg_x)=g_x\xi_{0,x}(f_x) \, \mbox{ and }\]
 \[\up{6}{D^c}{D}(\xi_1)_x:\I_{D,x}  \hookrightarrow \I_{D^c,x} \xrightarrow{\xi_{1,x}} \mo_{D^c,x} \, \mbox{ sends }\, F_x \mbox{ to } \xi_{1,x}(F_x) \mbox{ and } f_xg_x \mbox{ to } \xi_{1,x}(f_xg_x)=f_x\xi_{1,x}(g_x).\]}
 Since $f_x, g_x \in \I_{C.D,x}$, observe that 
 \[\rho_{_2}' \circ \up{6}{C}{D}(\xi_0)_x:\I_{D,x} \to \mo_{C.D,x}  \mbox{ sends }\, F_x \mbox{ to } \xi_{0,x}(F_x) \mod \I_{C.D,x} \mbox{ and } f_xg_x \mbox{ to } 0 \, \mbox{ and }\]
  \[\rho_{_2}' \circ \up{6}{D^c}{D}(\xi_1)_x:\I_{D,x} \to \mo_{C.D,x}  \mbox{ sends }\, F_x \mbox{ to } \xi_{1,x}(F_x) \mod \I_{C.D,x} \mbox{ and } f_xg_x \mbox{ to } 0.\]
  The claim follows immediately from this description.
  
  By assumption, for any $\xi \in H^0(\N_{C|\p3})$, there exists $\xi' \in H^0(\N_{D^c|\p3})$ (depending on $\xi$) such that $\beta'(\xi,\xi')=0$.
  By the claim, we have $\rho_{_2}'(\up{6}{C}{D}(\xi),\up{6}{D^c}{D}(\xi'))=0$.
  By the exactness of the bottom row of \eqref{de-17}, there exists $\xi_D \in H^0(\N_{D|\p3})$ such that 
  \[\up{5}{C}{D}(\xi_D)=\up{6}{C}{D}(\xi)\, \mbox{ and }\, \up{5}{D^c}{D}(\xi_D)=\up{6}{D^c}{D}(\xi').\]
  Since $\xi \in H^0(\N_{C|\p3})$ is arbitrary, we conclude that  $\Ima \up{6}{C}{D} \subset \Ima \up{5}{C}{D}$.
  This proves the theorem.
 \end{proof}

 \begin{exa}\label{de-21}
  Notations as in Theorem \ref{de-15}. Suppose that the canonical restriction 
  \[H^0(\T_{\p3} \otimes \mo_{D^c}) \to H^0(\T_{\p3} \otimes \mo_{C.D^c})\] is surjective (for example, if $C.D^c$ is a reduced, closed point). 
  Then  $(C \cup D^c,X)$ is a simple extension of $(C,X)$.
  Indeed, by Theorem \ref{de-15}, it suffices to prove that $r_{_{D^c}}$ is surjective. Consider, the following commutative diagram:
  \[\begin{diagram}
   H^0(\T_{\p3} \otimes \mo_{D^c})&\rTo&H^0(\T_{\p3} \otimes \mo_{C.D^c})&\rTo^a&H^0(\N_{X|\p3} \otimes \mo_{C.D^c})\\
   \dTo&\circlearrowleft&\dTo&\circlearrowleft&\dTo^{\mr{id}}\\
   H^0(\N_{D^c|\p3})&\rTo&H^0(\N_{D^c|\p3} \otimes \mo_{C.D^c})&\rTo&H^0(\N_{X|\p3} \otimes \mo_{C.D^c})
    \end{diagram}\]
 where the first two vertical maps (resp. $a$) are induced by the natural morphism from $\T_{\p3} \otimes \mo_{D^c}$ (resp. $\T_{\p3} \otimes \mo_X$)
 to $\N_{D^c|\p3}$ (resp. $\N_{X|\p3}$) and the bottom horizontal row is simply $r_{_{D^c}}$.
 Note that the morphism $a$ is surjective (pull-back functor is right exact). Hence, the top row of the diagram is surjective.
 By the commutativity of the diagram this implies $r_{_{D^c}}$ is surjective. This proves our claim.
  
  In the case $D^c$ is non-singular, then using Grothendieck vanishing theorem and \cite[Theorem II.$8.13$]{R1}, one can also check that 
  if $\deg(D^c)>C.D^c+2\rho_a(D^c)-2$, then $h^1(\T_{\p3} \otimes \mo_{D^c}(-C.D^c))$ vanishes. This implies that the canonical restriction 
   $H^0(\T_{\p3} \otimes \mo_{D^c}) \to H^0(\T_{\p3} \otimes \mo_{C.D^c})$ is surjective. Hence, similarly as before, $(C \cup D^c,X)$
   is a simple extension of $(C,X)$.
   \end{exa}

   \begin{thm}\label{de-18}
    Let $(E,X)$ be a $d$-embedded curve. Suppose that $E$ is integral. Let $S \subset E$ be a finite set of closed points on $E$. If $E$ is 
    first order $S$-free, then for any $d$-embedded curve $(C,X)$ in $X$ satisfying $C.E \subset S$, the curve $C \cup E$ is a simple extension of 
    $C$. In particular, if there exists a $d$-embedded curve $(C,X)$ in $X$ such that $(C \cup E,X)$ is not a simple extension of $(C,X)$, then
    $E$ is not first order $S$-free for any $S$ containing $C.E$.
   \end{thm}

   \begin{proof}
   Let us first consider the case that $E$ is first order $S$-free.
    Since $H^0(\N_{X|\p3} \otimes \mo_p) \cong \mb{C}$ for any $p \in S$, it follows from definition that the natural morphism,
    \[r_{_S}:H^0(\N_{E|\p3}) \to H^0(\N_{E|\p3} \otimes \mo_{S}) \to H^0(\N_{X|\p3} \otimes \mo_{S})  \, \mbox{ is surjective.}\]
    In particular, the sections $\{r_{_S}(\xi_p)\}_{p \in S}$ generate $H^0(\N_{X|\p3} \otimes \mo_S)$.
    Then  Theorem \ref{de-15} implies that for any $d$-embedded curve $(C,X)$ in $X$ satisfying $C.E \subset S$, 
    the curve $(C \cup E,X)$ is a simple extension of 
    $(C,X)$. This proves the first part of the theorem.
    
    We now prove the second part of the theorem, by contradiction. Suppose that $E$ is first order $S$-free for some 
    $S$ containing $C.E$. As in the proof of the first part, this means the natural morphism 
    \[H^0(\N_{E|\p3}) \xrightarrow{r_{_S}} H^0(\N_{X|\p3} \otimes \mo_S) \, \mbox{ is surjective.}\]
    Hence, the composed morphism 
    \[r_{_{C.E}}:H^0(\N_{E|\p3}) \xrightarrow{r_{_S}} H^0(\N_{X|\p3} \otimes \mo_S) \to  H^0(\N_{X|\p3} \otimes \mo_{C.E}) \, \mbox{ is surjective.}\]
   By Theorem \ref{de-15} this implies $(C \cup E,X)$ is a simple extension of $(C,X)$, which contradicts our hypothesis.
   Hence, $E$ cannot be  first order $S$-free for any $S$ containing $C.E$. This proves the theorem.
   \end{proof}

\bibliographystyle{alpha}
 \bibliography{researchbib}

\end{document}